\newtheorem{theorem}{Theorem}
\newtheorem{lemma}{Lemma}
\newtheorem{corollary}{Corollary}
\newtheorem{remark}{Remark}
\newenvironment{proof}{\begin{trivlist} \item[\hskip\labelsep{\it Proof.}]}{$\hfill\Box$\end{trivlist}}
\newcommand{\rd}{\,\mathrm{d}}
\newcommand{\bsx}{\boldsymbol{x}}
\newcommand{\NN}{\mathbb{N}}
\newcommand{\ZZ}{\mathbb{Z}}
\newcommand{\sym}{{\rm sym}}
\newcommand{\cH}{\mathcal{H}}
\newcommand{\cP}{\mathcal{P}}
\newcommand{\vecs}{\boldsymbol{\sigma}}
\newcommand{\qqq}{\mathbb{Q}^{\ast}(2^m)}
\title{An exact formula for the $L_2$ discrepancy of the symmetrized Hammersley point set}
\author{Ralph Kritzinger \thanks{The author is supported by the Austrian Science Fund (FWF): Project F5509-N26, which is a part of the Special Research Program "Quasi-Monte Carlo Methods: Theory and Applications".}}
\date{}
\begin{document}

\maketitle

\begin{abstract}
The process of symmetrization is often used to construct point sets with low $L_p$ discrepancy.
In the current work we apply this method to the shifted Hammersley point set.
It is known that for every shift this symmetrized point set
achieves an $L_p$ discrepancy of order $\mathcal{O}\left(\sqrt{\log{N}}/N\right)$ for $p\in [1,\infty)$, 
which is best possible in the sense of results by Roth, Schmidt and Hal\'{a}sz. In this paper we present an exact formula for
the $L_2$ discrepancy of the symmetrized Hammersley point set, which shows in
particular that it is independent of the choice for the shift.
\end{abstract} 

\centerline{\begin{minipage}[hc]{130mm}{
{\em Keywords:} $L_2$ discrepancy, Hammersley point set, Davenport's reflection principle\\
{\em MSC 2000:} 11K06, 11K38}
\end{minipage}}

 \allowdisplaybreaks
 
\section{Introduction and statement of the result}

 The local discrepancy $\Delta(\alpha,\beta,\mathcal{P})$ of an $N$-element point set $\mathcal{P}=\{\bsx_0,\dots,\bsx_{N-1}\}$ in the unit square $\left[0,1\right)^2$ is defined as
\[ \Delta(\alpha,\beta,\mathcal{P})=A(\left[0,\alpha\right)\times \left[0,\beta\right),\mathcal{P})-N\alpha\beta \]
for $\alpha, \beta \in \left(0,1\right]$. In this definition $A(\left[0,\alpha \right)\times \left[0,\beta\right),\mathcal{P})$ is the number of indices $0\leq n\leq N-1$ satisfying $\bsx_n \in \left[0,\alpha\right)\times \left[0,\beta\right)$.  The $L_p$ discrepancy of a point set $\mathcal{P}$ in $\left[0,1\right)^2$ is defined as
\[L_p(\mathcal{P})=\frac{1}{N}\left(\int_{0}^{1}\int_{0}^{1} |\Delta(\alpha,\beta,\mathcal{P})|^p \rd\alpha \rd\beta\right)^{\frac{1}{p}} \]
for $p\in[1,\infty)$. For $p\to \infty$ we obtain the notable star discrepancy. In this work we do not study this kind of discrepancy directly, but it should be mentioned that there is a remarkable asymptotic relation between the $L_p$ discrepancy and the star discrepancy (see \cite{Hor}). The $L_p$ discrepancy is a quantitative measure for the irregularity of distribution of a point set $\cP$ in $[0,1)^2$, see e.g. \cite{DT97,kuinie,mat}. It is also related to the worst-case integration error of a quasi-Monte Carlo rule, see e.g. \cite{DP10,LP14,Nied92}. It is well known that for every $p\in [1,\infty)$ there exists a constant $c_p > 0$ with the following
property: for the $L_p$ discrepancy of any point set $\mathcal{P}$ consisting of $N$ points in $[0, 1)^2$ we
have 
\begin{equation} \label{roth} L_p(\mathcal{P}) \geq c_p \frac{\sqrt{\log{N}}}{N}, \end{equation}
where $\log$ denotes the natural logarithm.
This was first shown by Roth \cite{Roth} for $p = 2$ and hence for all $p \in [2,\infty)$ and later by
Schmidt \cite{schX} for all $p\in(1,2)$. The case $p=1$ was verified by Hal\'{a}sz \cite{hala}.

Here we consider digit shifted Hammersley point sets. Let therefore $m$ be a positive integer and $\vecs=(\sigma_1,\sigma_2,\dots,\sigma_m) \in \{0,1\}^m$ a dyadic shift. We define the point set
$$ \cH_m(\vecs):=\left\{\left(\frac{t_m}{2}+\frac{t_{m-1}}{2^2}+\dots+\frac{t_1}{2^m},\frac{s_1}{2}+\frac{s_2}{2^2}+\dots+\frac{s_m}{2^m}\right):
t_1,\dots,t_m \in \{0,1\}\right\}, $$
where $s_j=t_j \oplus \sigma_j$ for all $j\in \{1,\dots,m\}$ (the operation $\oplus$ denotes addition modulo 2). The point set $\cH_m(\vecs)$ contains $2^m$ elements. We obtain the classical Hammersley point set $\cH_m$ with $2^m$ points by choosing $\vecs=(0,0,\dots,0)$. Additionally, we define
the $m$-tuple $\vecs^{\ast}=(\sigma_1^{\ast},\sigma_2^{\ast},\dots,\sigma_m^{\ast})$ by $\sigma_j^{\ast}=\sigma_j \oplus 1$ for all
$j\in \{1,\dots,m\}$. Then we introduce the symmetrized Hammersley point set $\cH_m^{\sym}(\vecs)$ as
$$ \cH_m^{\sym}(\vecs):=\cH_m(\vecs)\cup \cH_m(\vecs^{\ast}). $$
This point set has $2^{m+1}$ elements and can be regarded as symmetrized, since $\cH_m^{\sym}(\vecs)$ may also
be written as the union of $\cH_m(\vecs)$ with the point set
$$ \left\{ \left(x,1-\frac{1}{2^m}-y\right): (x,y)\in \cH_m(\vecs) \right\}. $$
 Figure~\ref{examples} shows examples of two symmetrized Hammersley point sets.

\begin{figure}[ht] \label{examples}
     \centering
     {\includegraphics[width=60mm]{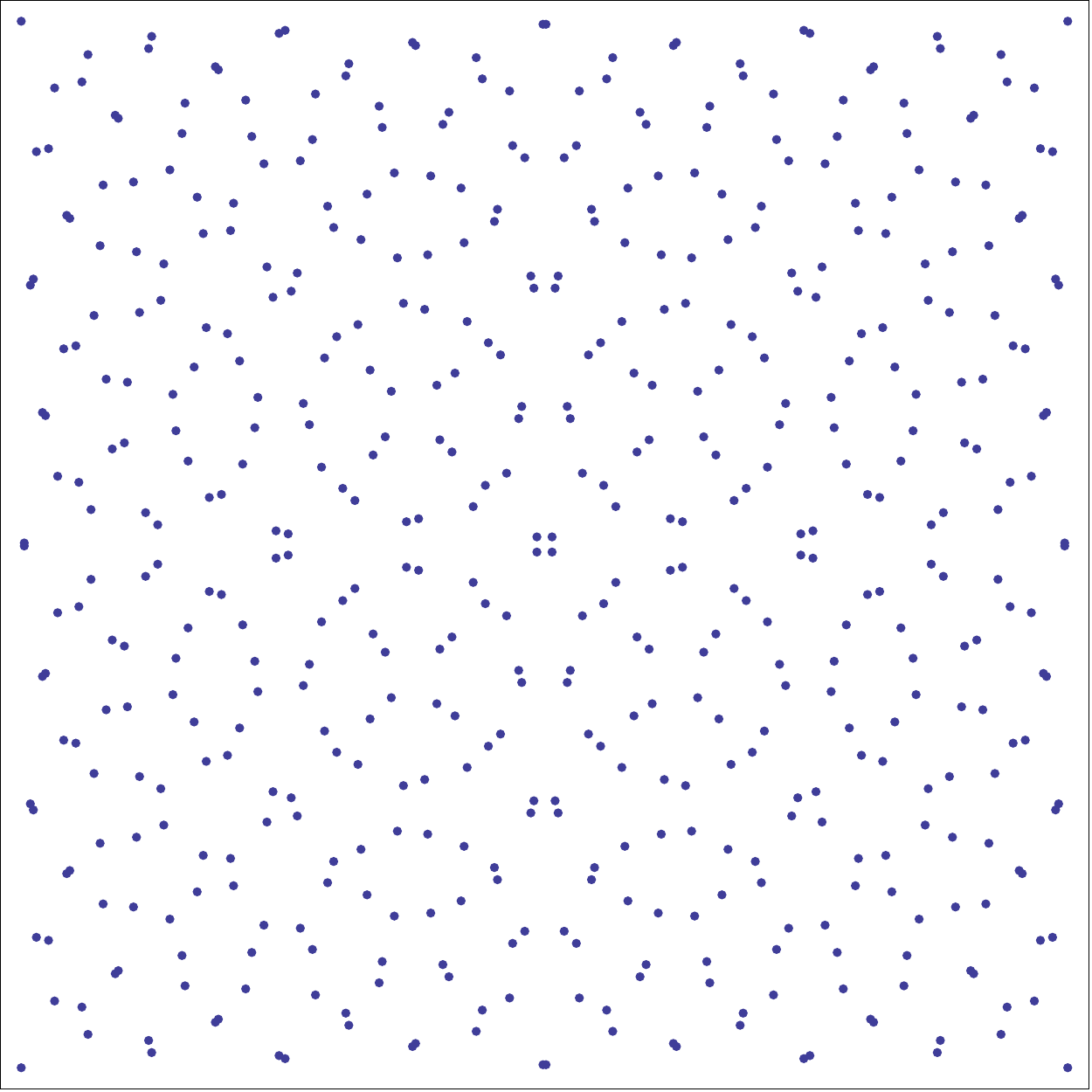}}
     \hspace{.1in}
     {\includegraphics[width=60mm]{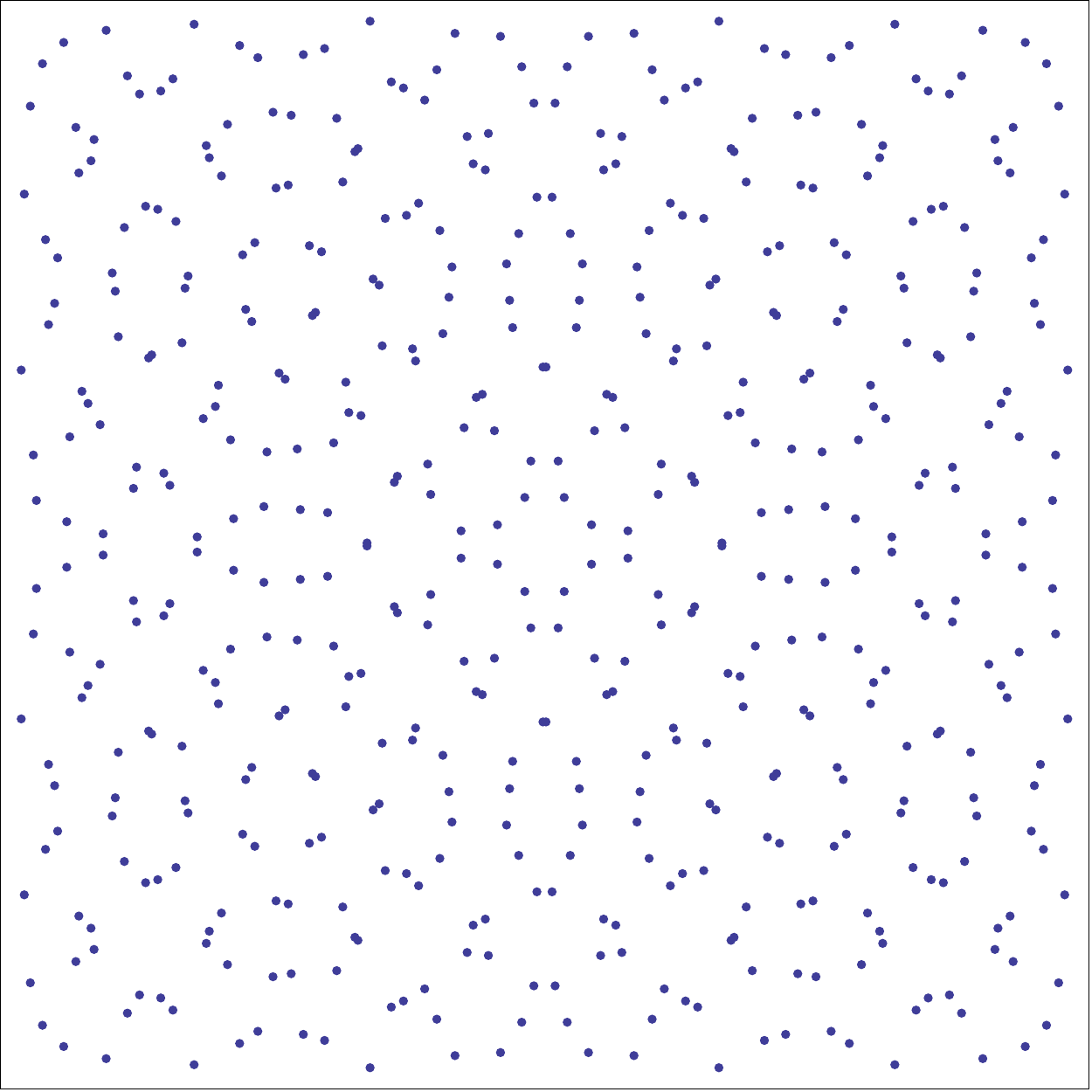}}
     \caption{The symmetrized Hammersley point sets $\cH_8^{\sym}(\vecs_i)$ for $i=1,2$, where $\vecs_1=(0,0,0,0,0,0,0,0)$ and $\vecs_2=(0,1,0,1,0,1,0,1)$. The $L_2$ discrepancy is $0.00255571\dots$ in both cases.}
     \label{f1}
\end{figure}
The concept of symmetrizing point sets plays an important role in finding point sets with the optimal order
of $L_p$ discrepancy in the sense of \eqref{roth}. Davenport \cite{daven} used this method in 1956 to construct for the first
time a two-dimensional point set with an $L_2$ discrepancy of order $\mathcal{O}\left(\sqrt{\log{N}}/N\right)$, and therefore showing that the lower bound \eqref{roth} is sharp for $p=2$. For this reason, the symmetrization method we use here is often referred to as Davenport's reflection principle. 

It is known that $L_p(\cH_m)$ is only of order $\mathcal{O}((\log{N})/N)$ for all $p\in[1,\infty)$ (see \cite{Pill}). However, in \cite[Theorem 2]{HKP14} it was shown with tools from harmonic analysis (the Haar function system and the Littlewood-Paley inequality) that the symmetrized Hammersley point set achieves an $L_p$ discrepancy of order $\mathcal{O}\left(\sqrt{\log{N}}/{N}\right)$
for all $p\in [1,\infty)$ independently of the shift $\vecs$. This order is best possible in the sense of \eqref{roth}. For the case $p=2$, this result follows already from \cite[Theorem 2]{lp} for the slightly different definition of a symmetrization of the classical Hammersley point set $\cH_m$, namely
$$ \widetilde{\cH}_m^{\sym}:=\cH_m \cup \left\{ \left(x,1-y\right): (x,y)\in \cH_m \right\}. $$
The previously mentioned results have the drawback that they do not deliver an exact value for the implied constant of the leading term of the $L_2$ discrepancy. The aim of this paper is to show an exact formula for the $L_2$ discrepancy of $\cH_m^{\sym}(\vecs)$, which gives not only a concrete constant, but also demonstrates that $L_2(\cH_m^{\sym}(\vecs))$ solely depends on the number of elements $N=2^{m+1}$ and not on the shift $\vecs$ whatsoever.
\begin{theorem} \label{Theo} Let $m\in\NN$ and $\vecs \in \{0,1\}^m$. Then we have
    $$ (2^{m+1}L_2(\cH_m^{\sym}(\vecs)))^2=\frac{m}{24}+\frac{11}{8}+\frac{1}{2^{m}}-\frac{1}{9\cdot 2^{2m+1}},  $$
		which can be displayed in terms of the number of elements $N=2^{m+1}$ as
	$$ L_2(\cH_m^{\sym}(\vecs))=\frac{1}{N}\left(\frac{\log{N}}{24\log{2}}+\frac{4}{3}+\frac{2}{N}-\frac{2}{9N^2}\right)^{\frac12}. 
$$ 
\end{theorem}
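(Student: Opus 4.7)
My plan is to compute the squared $L_2$ discrepancy via an explicit Haar-coefficient analysis followed by Parseval's identity, following the framework established in \cite{HKP14}. First I would exploit additivity of the local discrepancy over the disjoint union $\cH_m^{\sym}(\vecs) = \cH_m(\vecs) \cup \cH_m(\vecs^{\ast})$: writing $\Delta_{\vecs}$ for the local discrepancy of $\cH_m(\vecs)$ (and similarly $\Delta_{\vecs^{\ast}}$), one has $\Delta(\alpha,\beta,\cH_m^{\sym}(\vecs)) = \Delta_{\vecs}(\alpha,\beta) + \Delta_{\vecs^{\ast}}(\alpha,\beta)$. Squaring and integrating yields
$$\bigl(2^{m+1} L_2(\cH_m^{\sym}(\vecs))\bigr)^2 = \|\Delta_{\vecs}\|_2^2 + 2 \langle\Delta_{\vecs},\Delta_{\vecs^{\ast}}\rangle + \|\Delta_{\vecs^{\ast}}\|_2^2,$$
which I plan to evaluate by expansion in the tensor-product Haar basis.

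Next I would expand each $\Delta_{\vecs}$ in the Haar basis $\{h_{j_1,k_1}\otimes h_{j_2,k_2}\}$ on $[0,1]^2$ (indexed by $j\in\{-1,0,1,\ldots\}$, with $j=-1$ denoting the constant function). The digital structure of $\cH_m(\vecs)$ yields explicit closed forms for the Haar coefficients $\mu^{\vecs}_{j_1,k_1,j_2,k_2}$ in terms of the digits of $\vecs$ and restricts the Parseval sum to a manageable collection of levels. A direct computation would then show that replacing $\vecs$ by $\vecs^{\ast}$ multiplies each such coefficient by a shift-determined sign $(-1)^{\eta(\vecs,j_1,k_1,j_2,k_2)}$; consequently $(\mu^{\vecs}+\mu^{\vecs^{\ast}})^2$ is either $4(\mu^{\vecs})^2$ or zero, and the surviving coefficients have magnitudes independent of $\vecs$ --- this is the cancellation producing the asserted shift-independence.

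Finally, Parseval's identity converts the squared $L_2$ norm into a finite sum of the surviving $2^{j_1+j_2}|\mu|^2$. Grouping the surviving indices by $(j_1,j_2)$, I anticipate the interior levels contributing the linear-in-$m$ term $m/24$ through a telescoping-style double sum, while the boundary indices --- those involving the constant Haar function in one coordinate (encoding the one-dimensional marginal discrepancies) together with the topmost interior block --- produce the additive constant $11/8$ and the lower-order corrections $1/2^m$ and $-1/(9\cdot 2^{2m+1})$.

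The principal obstacle will be computing the Haar coefficients of $\Delta_{\vecs}$ in a form that makes the sign behaviour under $\vecs \to \vecs^{\ast}$ transparent; this requires a careful analysis of how the dyadic expansions of the shifted Hammersley coordinates interact with the Haar functions at each resolution level. Once this is under control, the remaining Parseval summation is elementary but demands careful bookkeeping of the boundary contributions to reproduce the precise constants displayed in the theorem.
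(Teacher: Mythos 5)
Your opening step is exactly the paper's Lemma~\ref{symlemma}: $\Delta_{\sym}=\Delta_1+\Delta_2$, square and integrate. But from there the paper does not use Haar analysis at all. It inserts the known exact Kritzer--Pillichshammer formula (Lemma~\ref{l2shift}) for $(2^mL_2(\cH_m(\vecs)))^2$ and for $(2^mL_2(\cH_m(\vecs^{\ast})))^2$ (with $l$ replaced by $m-l$), and then computes the cross term $\int_0^1\int_0^1\Delta_1\Delta_2\rd\alpha\rd\beta$ exactly, via the Larcher--Pillichshammer closed formula for the local discrepancy at $m$-bit points (Lemma~\ref{AllgemeineFormel}), combinatorial digit sums over $\alpha$ (Lemma~\ref{Alphalemma}), nearest-integer sums over $\beta$ (Lemma~\ref{Betalemma}), and boundary-strip corrections. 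Crucially, the shift-independence emerges algebraically: the $l$-dependence $\frac{l^2}{8}-\frac{lm}{8}$ of the two diagonal terms is cancelled exactly by the $-\frac{l^2}{16}+\frac{lm}{16}$ (doubled) in the cross term of Lemma~\ref{gemischt} --- not by any coefficientwise sign cancellation.

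The genuine gap in your plan is the claimed dichotomy that passing from $\vecs$ to $\vecs^{\ast}$ multiplies \emph{each} Haar coefficient by a sign, so that $(\mu^{\vecs}+\mu^{\vecs^{\ast}})^2$ is either $4(\mu^{\vecs})^2$ or $0$. This is false, and one can see it already at the level $j=(-1,-1)$: the coefficient $\int_0^1\int_0^1\Delta_{\vecs}\rd\alpha\rd\beta$ grows linearly in $m$ (it is the source of the $\frac{m^2}{64}$ leading term in Lemma~\ref{l2shift}), and under $\vecs\to\vecs^{\ast}$ only its dominant, shift-dependent part flips sign, leaving a nonzero $\mathcal{O}(1)$ remainder in the sum. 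If your dichotomy held, this coefficient would either vanish identically or contribute a term of order $m^2$ to $(2^{m+1}L_2(\cH_m^{\sym}(\vecs)))^2$, both incompatible with the exact value $\frac{m}{24}+\frac{11}{8}+\frac{1}{2^m}-\frac{1}{9\cdot 2^{2m+1}}$. In general each Haar coefficient is a sum of a sign-flipping dominant piece and lower-order pieces that do not flip; the cancellation of the dominant pieces is precisely the mechanism in \cite{HKP14} for the \emph{order} bound $\mathcal{O}(\sqrt{\log N}/N)$, while the constants $\frac{11}{8}$, $\frac{1}{2^m}$ and $-\frac{1}{9\cdot 2^{2m+1}}$ in the theorem come exactly from the surviving remainders that your dichotomy discards. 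A Parseval route to the exact formula is conceivable in principle, but it would require the exact (not order-of-magnitude) Haar coefficients of $\Delta_1+\Delta_2$ and full bookkeeping of these non-cancelling parts; as written, your argument would not reproduce the theorem.
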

We derive the following corollary on the point set $\widetilde{\cH}_m^{\sym}(\vecs)$ defined as the union of $\cH_m(\vecs)$
with the point set
$  \left\{ \left(x,1-y\right): (x,y)\in \cH_m(\vecs) \right\}. $
This point set also has $2^{m+1}$ elements, where some points might coincide.
\begin{corollary} Let $m\in\NN$ and $\vecs \in \{0,1\}^m$. Then we have with $N=2^{m+1}$ 
  $$  L_2(\widetilde{\cH}_m^{\sym}(\vecs))=\frac{1}{N}\sqrt{\frac{\log{N}}{24\log{2}}}+\mathcal{O}\left(\frac{1}{N}\right). $$
\end{corollary}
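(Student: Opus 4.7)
The plan is to derive the corollary from Theorem 1 by a perturbation argument, since the two symmetrizations only differ in the $y$-coordinate of half of the points by $1/2^m$.

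First I would write $\mathcal{P}=\cH_m^{\sym}(\vecs)$ and $\widetilde{\mathcal{P}}=\widetilde{\cH}_m^{\sym}(\vecs)$ and note that both point sets contain $\cH_m(\vecs)$; the remaining halves are $\{(x,1-1/2^m-y):(x,y)\in\cH_m(\vecs)\}$ for $\mathcal{P}$ and $\{(x,1-y):(x,y)\in\cH_m(\vecs)\}$ for $\widetilde{\mathcal{P}}$, so the latter is obtained from the former by shifting every point of the second half by $+1/2^m$ in the $y$-direction. Then I would study
$$ f(\alpha,\beta):=\Delta(\alpha,\beta,\widetilde{\mathcal{P}})-\Delta(\alpha,\beta,\mathcal{P}). $$
Since the $N\alpha\beta$ term and the first halves cancel, $f(\alpha,\beta)$ reduces to a signed count over the second half of $\mathcal{P}$: for $(x_*,y_*)$ in the second half of $\mathcal{P}$, the contribution is $\mathbf{1}_{x_*<\alpha}\bigl(\mathbf{1}_{y_*+1/2^m<\beta}-\mathbf{1}_{y_*<\beta}\bigr)$, which is nonzero only when $\beta\in(y_*,y_*+1/2^m]$.

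Next I would use that the values $y_*\in\{0,1/2^m,\dots,(2^m-1)/2^m\}$ are attained exactly once each (via the bijection with digit tuples) and that the half-open intervals $(y_*,y_*+1/2^m]$ partition $(0,1]$. Hence, for each fixed $\beta$ there is exactly one contributing second-half point $(x_*(\beta),y_*(\beta))$, and $f(\alpha,\beta)=-\mathbf{1}_{x_*(\beta)<\alpha}$. This immediately gives the pointwise bound $|f(\alpha,\beta)|\le 1$, so
$$ \|f\|_{L_2([0,1]^2)}^2 \le \int_0^1\int_0^1 \mathbf{1}_{x_*(\beta)<\alpha}\,\rd\alpha\,\rd\beta \le 1. $$
By the triangle inequality in $L_2$ applied to $\Delta(\cdot,\cdot,\widetilde{\mathcal{P}})=\Delta(\cdot,\cdot,\mathcal{P})+f$, I obtain
$$ \bigl| N L_2(\widetilde{\mathcal{P}})- N L_2(\mathcal{P})\bigr| \le \|f\|_2 \le 1. $$

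Finally I would plug in Theorem~\ref{Theo}: since $N L_2(\mathcal{P})=\sqrt{m/24+11/8+O(2^{-m})}$ and $\sqrt{m/24+c}=\sqrt{m/24}+O(1/\sqrt{m})=O(1)$ correction, I get $N L_2(\widetilde{\mathcal{P}})=\sqrt{m/24}+O(1)$. Substituting $m=\log N/\log 2 -1$ and expanding $\sqrt{m/24}=\sqrt{\log N/(24\log 2)}+O(1/\sqrt{\log N})$ absorbs the discrepancy between $m$ and $\log N/\log 2$ into the $O(1)$ error, so $L_2(\widetilde{\cH}_m^{\sym}(\vecs))=\frac{1}{N}\sqrt{\log N/(24\log 2)}+O(1/N)$, as required. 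There is no real obstacle here; the only thing worth verifying carefully is the partition-of-$(0,1]$ argument that controls $f$, and the book-keeping of the boundary point with $y_*+1/2^m=1$ (which never lies in any $[0,\beta)$ and hence does not change the count).
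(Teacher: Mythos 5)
Your proposal is correct, and at the top level it follows the same strategy as the paper: establish $\bigl|L_2(\widetilde{\cH}_m^{\sym}(\vecs))-L_2(\cH_m^{\sym}(\vecs))\bigr|\leq 1/N$ and then invoke Theorem~\ref{Theo}. The difference is in how that comparison inequality is obtained. The paper simply cites \cite[Lemma 4]{HKP14} and is done in two lines; you instead prove the inequality from scratch, and your argument is sound: the difference $f(\alpha,\beta)=\Delta(\alpha,\beta,\widetilde{\mathcal{P}})-\Delta(\alpha,\beta,\mathcal{P})$ does reduce to the signed count over the second half (the $N\alpha\beta$ terms and the common copy of $\cH_m(\vecs)$ cancel, provided one counts with multiplicity, which is the right convention since $\widetilde{\cH}_m^{\sym}(\vecs)$ is a multiset and one reflected point lands at height $1$ -- you correctly flag that this point is never captured by any box $[0,\beta)$); the $y$-coordinates of $\cH_m(\vecs^{\ast})$ do run through $\{0,1/2^m,\dots,(2^m-1)/2^m\}$ exactly once (the map $t\mapsto s^{\ast}=t\oplus\vecs^{\ast}$ is a bijection on $\{0,1\}^m$), so the intervals $(y_*,y_*+2^{-m}]$ partition $(0,1]$ and $|f|\leq 1$ pointwise; the triangle inequality in $L_2$ and the identity $NL_2(\mathcal{Q})=\|\Delta(\cdot,\cdot,\mathcal{Q})\|_{L_2}$ then give the bound, and your final expansion $\sqrt{\log N/(24\log 2)+O(1)}=\sqrt{\log N/(24\log 2)}+O(1)$ is routine. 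What your route buys: it makes the corollary self-contained rather than dependent on an external lemma, and the pointwise estimate $|f|\leq 1$ is actually stronger than what is needed -- it yields the analogous $O(1/N)$ comparison between the two symmetrizations for every $L_p$ norm, $p\in[1,\infty)$, not just $p=2$. What the paper's route buys is brevity, at the cost of opacity about why the two symmetrizations are so close.
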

\begin{proof} From \cite[Lemma 4]{HKP14} we have the relation
    $$ \left| L_2(\widetilde{\cH}_m^{\sym}(\vecs))-L_2(\cH_m^{\sym}(\vecs)) \right|\leq \frac{1}{2^{m+1}}=\frac{1}{N}. $$
		Together with Theorem~\ref{Theo} this inequality yields the result.
\end{proof}

The proof of Theorem~\ref{Theo} relies strongly on techniques developed and employed in the papers \cite{Kri2,Kri1,Lar,Pill}. The methods and results of \cite{Kri2}, where the $L_2$ discrepancy of $\cH_m(\vecs)$ was computed exactly,
are particularly important in order to prove the theorem. We comment on those results in Remark~\ref{remrem}, Lemma~\ref{l2shift} and Remark~\ref{optimalshift}. The fact that we can write the symmetrized Hammersley point set as a union of two shifted Hammersley point sets allows us to employ the same techniques in this paper. The reader is invited to compare Theorem~\ref{Theo} to the result of Kritzer and Pillichshammer as stated in Lemma~\ref{l2shift}.

\begin{remark} \rm \label{remrem} Theorem~\ref{Theo} shows that we cannot expect a lower $L_2$ discrepancy by first shifting the classical Hammersley point set and then symmetrizing it. We can therefore simply symmetrize the classical Hammersley point set itself. This is a remarkably easy construction of a point set with very low $L_2$ discrepancy. However, the coefficient of the leading term $\sqrt{\log{N}}/N$ of $L_2(\cH_m^{\sym}(\vecs))$ is $\sqrt{1/(24\log{2})}\approx 0.2451\dots$, which is slightly higher than for the shifted Hammersley point set
$\cH_m(\vecs)$ under the condition that the number of ones and zeros in $\vecs$ is more or less balanced. In this case $\cH_m(\vecs)$ achieves an $L_2$ discrepancy of optimal order of magnitude in $N$ as shown by Kritzer and Pillichshammer in \cite{Kri2, Kri1}. The coefficient of the leading term of $L_2(\cH_m(\vecs))$ is then $\sqrt{5/(192\log{2})}\approx 0.1938\dots$ (see also Lemma~\ref{l2shift} and Remark~\ref{optimalshift}). The smallest known leading constant is achieved for the $L_2$ discrepancy of digit scrambled Hammersley point sets in base $22$ and has the value $\sqrt{278629/(2811072\log{22})}\approx 0.1790\dots$, as shown in \cite{FPPS09}.
\end{remark}
\begin{remark} \rm A further exact formula for the $L_2$ discrepancy of a symmetrized point set was discovered in
  \cite{bil}. There the authors considered symmetrized Fibonacci lattice point sets, whose $L_2$ discrepancy is also of order $\mathcal{O}\left(\sqrt{\log{N}}/N\right)$. The leading term has a complicated form, but numerical results indicate that the $L_2$ discrepancy of these point sets has a constant around $0.176\dots$ This would be slightly better than the result for digit scrambled Hammersley point sets mentioned in Remark~\ref{remrem}. 
\end{remark}

\section{Auxiliary results}

Throughout this paper, we call a real number $\alpha \in \left[0,1\right)$ $m$-bit if it is contained in the set
$\mathbb{Q}(2^m):=\{0,\frac{1}{2^m},\dots,\frac{2^m-1}{2^m}\}$. Hence, $\alpha$ is of the form $\alpha=\frac{\alpha_1}{2}+\dots+\frac{\alpha_m}{2^m}$, where $\alpha_j \in \{0,1\}$ for all $j \in \{1,\dots,m\}$. We also set $\qqq:=\mathbb{Q}(2^m)\setminus \{0\}$. We write $\Delta_1(\alpha,\beta)$ for the local discrepancy of $\cH_m(\vecs)$, $\Delta_2(\alpha,\beta)$ for the local discrepancy of $\cH_m(\vecs^{\ast})$ and $\Delta_{\sym}(\alpha,\beta)$ for the local discrepancy of $\cH_m^{\sym}(\vecs)$. 

The first lemma, which gives an exact formula for the local discrepancy of $\cH_m(\vecs)$, can be derived 
from a result of Larcher and Pillichshammer in \cite{Lar} and was first stated explicitely in \cite[Lemma 1]{Kri1}. The second assertion in this lemma is a consequence of the fact that the components of all elements in $\cH_m(\vecs)$ are $m$-bit as it has already been pointed out in \cite[Remark 3]{Kri1}.
Here and in the following, $\|x\|:=\min_{z \in \ZZ}|x-z|$ denotes the distance to the nearest integer of a real number $x$.
\begin{lemma} \label{AllgemeineFormel} For the local discrepancy $\Delta(\alpha,\beta)$ of $\cH_m(\vecs)$  we have 
  \begin{enumerate}
       \item $ \Delta(\alpha,\beta)=\sum_{u=0}^{m-1} \|2^u \beta\|(-1)^{\sigma_{u+1}} (\alpha_{m-u}\oplus\alpha_{m+1-j(u)})$ for $m$-bit numbers $\alpha=\frac{\alpha_1}{2}+\dots+\frac{\alpha_m}{2^m}$ and $\beta=\frac{\beta_1}{2}+\dots+\frac{\beta_m}{2^m}$ (we set $\alpha_{m+1}=0$), where $j(u)$ for $0\leq u\leq m-1$ is defined as 
\[ j(u)=\begin{cases}
         0 & \text{if } u=0, \\
			   0 & \text{if } \alpha_{m+1-j}=\beta_j \oplus \sigma_j \text{\, for \,} j=1,\dots,u, \\
					 \max\{j \leq u: \alpha_{m+1-j} \neq \beta_j \oplus \sigma_j \} & \text{otherwise.}
        \end{cases}
\]
       \item $\Delta(\alpha,1)=0$ for $m$-bit $\alpha$ and $ \Delta(\alpha,\beta)=\Delta(\alpha(m),\beta(m))+2^m(\alpha(m)\beta(m)-\alpha\beta)$ for arbitrary $\alpha,\beta \in (0,1]$, where $\alpha(m)$ and $\beta(m)$ are the smallest $m$-bit numbers greater than or equal to $\alpha$ or $\beta$, respectively. (For $\alpha, \beta > 1-2^{-m}$ we choose $\alpha(m)=1$ and $\beta(m)=1$, respectively.)
   \end{enumerate} \end{lemma}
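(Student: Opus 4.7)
The plan is to prove the two assertions separately, starting with the easier part 2. Every point of $\cH_m(\vecs)$ has both coordinates in $\mathbb{Q}(2^m)$, and since the map $t\mapsto x_t$ is a bijection from $\{0,1\}^m$ onto $\mathbb{Q}(2^m)$, exactly $2^m\alpha$ points satisfy $x_t<\alpha$ whenever $\alpha$ is $m$-bit, while all $y_t<1$ trivially; this yields $\Delta(\alpha,1)=0$. For arbitrary $\alpha,\beta\in(0,1]$, no point of $\cH_m(\vecs)$ lies in the strips $[\alpha,\alpha(m))\times[0,1]$ or $[0,1]\times[\beta,\beta(m))$, so replacing $\alpha,\beta$ by $\alpha(m),\beta(m)$ preserves the counting function, and adding and subtracting $2^m\alpha(m)\beta(m)$ then produces the reduction formula.

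For part 1, the plan is to count $A([0,\alpha)\times[0,\beta),\cH_m(\vecs))$ directly for $m$-bit $\alpha,\beta$. The condition $x_t<\alpha$ amounts to the bit string $t_mt_{m-1}\cdots t_1$ being lexicographically smaller than $\alpha_1\cdots\alpha_m$, and $y_t<\beta$ amounts to $(t_1\oplus\sigma_1)\cdots(t_m\oplus\sigma_m)$ being lexicographically smaller than $\beta_1\cdots\beta_m$. I would decompose each event by position of first disagreement, writing $\{x_t<\alpha\}$ as a disjoint union over indices $k$ with $\alpha_k=1$ (fixing $t_m,\dots,t_{m+2-k}$ and forcing $t_{m+1-k}=0$), and similarly for $\{y_t<\beta\}$. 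The size of the joint intersection then reorganizes into a sum over scales $u=0,\dots,m-1$ in which the $(u{+}1)$-st bit of $\beta$ controls the contribution; this contribution naturally factors as $\|2^u\beta\|$ (arising from the dyadic structure of $\beta$ at scale $u$) times a combinatorial coefficient depending on $\alpha$, $\vecs$ and $u$.

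The function $j(u)$ is the device that captures this combinatorial coefficient: it records the largest index $j\leq u$ at which $\alpha_{m+1-j}\neq\beta_j\oplus\sigma_j$, which is precisely the position that governs whether, at scale $u$, the $x$- and $y$-lex constraints are compatible. Thus $(\alpha_{m-u}\oplus\alpha_{m+1-j(u)})$ serves as a compatibility indicator, while $(-1)^{\sigma_{u+1}}$ accounts for whether the shift $\sigma_{u+1}$ reverses the orientation of the contribution at that scale. Subtracting $2^m\alpha\beta$ from the explicit count and cancelling the contributions where $\alpha_{m-u}=\alpha_{m+1-j(u)}$ should leave exactly the claimed sum.

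The main obstacle is the intricate bookkeeping required to verify that the combinatorial partition matches the closed form with the complicated definition of $j(u)$; in particular one must handle separately the subcases (i) $j(u)=0$ arising from $u=0$, (ii) $j(u)=0$ arising from full agreement up to $u$, and (iii) generic $j(u)>0$. I would follow the template of Larcher and Pillichshammer \cite{Lar}, using an induction on $u$ (or equivalently on $m$) that peels off one bit of $\beta$ at a time and verifies the incremental contribution against the claimed summand, ultimately reducing the problem to the explicit discrepancy formula for a single dyadic level.
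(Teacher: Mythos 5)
You should first note that the paper contains no proof of this lemma to compare against: part 1 is cited from Larcher--Pillichshammer \cite{Lar}, as stated explicitly in \cite[Lemma 1]{Kri1}, and part 2 is quoted from \cite[Remark 3]{Kri1}. Your argument for part 2 is correct and complete, and it is exactly the justification the paper alludes to: the first coordinates of $\cH_m(\vecs)$ run bijectively through $\mathbb{Q}(2^m)$, which gives $\Delta(\alpha,1)=0$ for $m$-bit $\alpha$; and since both coordinates of every point are $m$-bit, no point lies in $[\alpha,\alpha(m))\times[0,1]$ or $[0,1]\times[\beta,\beta(m))$, so the counting function is invariant under rounding up to $\alpha(m),\beta(m)$, and adding and subtracting $2^m\alpha(m)\beta(m)$ yields the reduction formula (your handling of the boundary convention $\alpha(m)=1$ for $\alpha>1-2^{-m}$ is also consistent, as the largest coordinate is $1-2^{-m}$).

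For part 1, however, what you have written is a plan rather than a proof, and the unexecuted portion is precisely the content of the lemma. Your lexicographic decompositions of $\{x_t<\alpha\}$ and $\{y_t<\beta\}$ by position of first disagreement are correct, and they are indeed the right starting point. But everything after that is asserted, not derived: that the joint count reorganizes by scale $u$; that the tail bits of $\beta$ assemble into the factor $\|2^u\beta\|$ (note that $\|2^u\beta\|$ depends on \emph{all} of $\beta_{u+1},\dots,\beta_m$, not merely on ``the $(u{+}1)$-st bit of $\beta$'' as your sketch suggests --- one needs the dichotomy $\|2^u\beta\|=0.\beta_{u+1}\dots\beta_m$ if $\beta_{u+1}=0$ and $\|2^u\beta\|=1-0.\beta_{u+1}\dots\beta_m$ if $\beta_{u+1}=1$, which is where the case analysis lives); that the interaction of the top-down constraints on $t_m,t_{m-1},\dots$ with the bottom-up constraints on $t_1,t_2,\dots$ is governed exactly by $j(u)=\max\{j\le u:\alpha_{m+1-j}\neq\beta_j\oplus\sigma_j\}$, including the two distinct sources of $j(u)=0$; that the sign is $(-1)^{\sigma_{u+1}}$; and that the subtraction of $2^m\alpha\beta$ produces exactly the claimed cancellation. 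You flag all of this yourself with ``should'' and ``naturally'' and then defer to ``the template of Larcher and Pillichshammer.'' That deferral is in effect what the paper does too (it cites \cite{Lar} and \cite{Kri1} instead of proving the formula), so your outline is faithful to the known route; but judged as a standalone proof, the decisive combinatorial bookkeeping is missing. I see no step that would fail, but the hard identity is nowhere verified.
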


  \begin{lemma} \label{symlemma} For all $\alpha,\beta\in (0,1]$ we have $ \Delta_{\sym}(\alpha,\beta)=\Delta_1(\alpha,\beta)+\Delta_2(\alpha,\beta). $
\end{lemma}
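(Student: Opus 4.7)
The plan is to unpack the definition of local discrepancy for the union and split it into the contributions of the two constituent point sets. Writing $A(B,\mathcal{P})$ for the number of points of $\mathcal{P}$ lying in a box $B\subseteq [0,1)^2$, the definition gives
\[
\Delta_{\sym}(\alpha,\beta)=A([0,\alpha)\times[0,\beta),\cH_m^{\sym}(\vecs))-2^{m+1}\alpha\beta,
\]
since $\cH_m^{\sym}(\vecs)$ contains $N=2^{m+1}$ points. My goal is to reduce this to the sum of the analogous expressions for $\cH_m(\vecs)$ and $\cH_m(\vecs^{\ast})$, each normalised by $2^m\alpha\beta$.

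First I would verify that the union $\cH_m(\vecs)\cup\cH_m(\vecs^{\ast})$ is disjoint. Fixing the $x$-coordinate of a point amounts to fixing the digits $t_1,\ldots,t_m$, and the corresponding $y$-coordinates in the two sets have digits $s_j=t_j\oplus\sigma_j$ and $s_j'=t_j\oplus\sigma_j^{\ast}=s_j\oplus 1$ respectively, so they differ in every dyadic digit and are therefore distinct. Hence the counting function is additive on the union:
\[
A([0,\alpha)\times[0,\beta),\cH_m^{\sym}(\vecs))=A([0,\alpha)\times[0,\beta),\cH_m(\vecs))+A([0,\alpha)\times[0,\beta),\cH_m(\vecs^{\ast})).
\]

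Finally, I would split the volume term as $2^{m+1}\alpha\beta=2^m\alpha\beta+2^m\alpha\beta$ and regroup, obtaining
\[
\Delta_{\sym}(\alpha,\beta)=\bigl(A([0,\alpha)\times[0,\beta),\cH_m(\vecs))-2^m\alpha\beta\bigr)+\bigl(A([0,\alpha)\times[0,\beta),\cH_m(\vecs^{\ast}))-2^m\alpha\beta\bigr),
\]
which is exactly $\Delta_1(\alpha,\beta)+\Delta_2(\alpha,\beta)$. There is no genuine obstacle here; the only point worth checking carefully is the disjointness step, after which the identity is a one-line bookkeeping argument using additivity of counting and the fact that $|\cH_m^{\sym}(\vecs)|=|\cH_m(\vecs)|+|\cH_m(\vecs^{\ast})|$.
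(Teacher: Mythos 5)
Your proof is correct and follows essentially the same route as the paper: additivity of the counting function $A$ over the union together with splitting $2^{m+1}\alpha\beta=2^m\alpha\beta+2^m\alpha\beta$. The only difference is that you explicitly verify the disjointness of $\cH_m(\vecs)$ and $\cH_m(\vecs^{\ast})$, which the paper leaves implicit in the remark that $\cH_m^{\sym}(\vecs)$ has $2^{m+1}$ elements; this is a reasonable extra precaution and your digit argument for it is valid.
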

\begin{proof}
   We have 
   \begin{align*}
          \Delta_{\sym}(\alpha,\beta)=& A(\left[0,\alpha\right)\times \left[0,\beta\right),\cH_m^{\sym}(\vecs))-2^{m+1}\alpha\beta \\
            =&  A(\left[0,\alpha\right)\times \left[0,\beta\right),\cH_m(\vecs))+ A(\left[0,\alpha\right)\times \left[0,\beta\right),\cH_m(\vecs^{\ast}))
               -2^m\alpha\beta-2^m\alpha\beta \\
           =& \Delta_1(\alpha,\beta)+\Delta_2(\alpha,\beta)
   \end{align*}
   for all $\alpha,\beta\in (0,1]$.
\end{proof}
Throughout the next lemma, we always write $j_1(u)$ if the function $j(u)$ appearing in the first part of Lemma~\ref{AllgemeineFormel} refers to $\Delta_1(\alpha,\beta)$	and $j_2(u)$ if it refers to $\Delta_2(\alpha,\beta)$.   
  \begin{lemma} \label{Alphalemma}
	Let $\alpha=\frac{\alpha_1}{2}+\dots+\frac{\alpha_m}{2^m}$ and $\beta=\frac{\beta_1}{2}+\dots+\frac{\beta_m}{2^m}$ be $m$-bit.
	\begin{enumerate}
\item For $u_1,u_2 \in \{0,\dots,m-1\}$ with $u_1\neq u_2$ we have
$$ \sum_{\alpha\in\qqq}(\alpha_{m-u_1}\oplus \alpha_{m+1-j_1(u_1)})(\alpha_{m-u_2}\oplus \alpha_{m+1-j_2(u_2)})=2^{m-2}. $$
 \item For $u \in \{0,\dots,m-1\}$ we have
\begin{align*} \sum_{\alpha\in\qqq}&(\alpha_{m-u}\oplus \alpha_{m+1-j_1(u)})(\alpha_{m-u}\oplus \alpha_{m+1-j_2(u)}) \\
   =&  \begin{cases}   2^{m-u-1} & \mbox{if } u\in \{0,1\},  \\
         2^{m-u-1}\left(1+\sum_{j=1}^{u-1}2^j((\gamma_j \oplus 1)\gamma_u+\gamma_j (\gamma_u \oplus 1))\right)  & \mbox{if } u\in\{2,\dots,m-1\}. 
         \end{cases} \end{align*}
In the last expression, we define $\gamma_j:=\beta_j\oplus \sigma_j$ for all $j\in\{1,\dots,m-1\}$.\end{enumerate}
	
\end{lemma}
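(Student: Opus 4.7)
The plan is to exploit the complementarity of $\vecs$ and $\vecs^\ast$. Writing $\gamma_j := \beta_j \oplus \sigma_j$, we have $\beta_j \oplus \sigma_j^\ast = \gamma_j \oplus 1$, so the definitions in Lemma~\ref{AllgemeineFormel} make $j_1(u)$ the largest index $j \leq u$ at which $\alpha_{m+1-j} \neq \gamma_j$ (or $0$ if no such $j$ exists), while $j_2(u)$ is the largest index $j \leq u$ at which $\alpha_{m+1-j} = \gamma_j$ (or $0$ otherwise). Consequently $\alpha_{m+1-j_1(u)}$ equals $\gamma_{j_1(u)} \oplus 1$ when $j_1(u) > 0$ and $\alpha_{m+1} = 0$ when $j_1(u) = 0$; similarly $\alpha_{m+1-j_2(u)}$ equals $\gamma_{j_2(u)}$ or $0$. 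The point $\alpha = 0$ contributes zero to both sums (every XOR in sight vanishes), so it is harmless to replace the sum over $\qqq$ by the sum over $\mathbb{Q}(2^m)$.

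I would then split the $\alpha$-sum by fixing the ``pattern'' $(\alpha_m, \ldots, \alpha_{m+1-u_2})$ in part~1 and $(\alpha_m, \ldots, \alpha_{m+1-u})$ in part~2, after which each factor $\alpha_{m-u_i} \oplus \alpha_{m+1-j_i(u_i)}$ collapses to $\alpha_{m-u_i} \oplus c_i$ for a pattern-dependent constant $c_i \in \{0,1\}$. Two elementary identities do the heavy lifting: summing a single bit $b$ against $b \oplus c$ gives $1$, and $\sum_{b \in \{0,1\}} (b \oplus c_1)(b \oplus c_2)$ equals $1$ if $c_1 = c_2$ and $0$ otherwise. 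For part~1 (assume $u_1 < u_2$), the bit $\alpha_{m-u_2}$ appears only in the second factor and sums out to $1$ independently; the middle pattern bits $\alpha_{m-u_1-1}, \ldots, \alpha_{m+1-u_2}$ and the outer free bits $\alpha_1, \ldots, \alpha_{m-u_2-1}$ contribute $2^{u_2-u_1-1}$ and $2^{m-u_2-1}$; the first factor sums to $1$ over $\alpha_{m-u_1}$; and the remaining $u_1$ pattern bits give $2^{u_1}$. Multiplying: $2^{m-u_2-1} \cdot 2^{u_2-u_1-1} \cdot 2^{u_1} = 2^{m-2}$, as claimed.

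For part~2 the sum collapses to $2^{m-u-1}$ times the number of $u$-patterns with $c_1 = c_2$, and I would enumerate these in three cases: (A) all $\alpha_{m+1-j} = \gamma_j$, so $j_1 = 0$, $j_2 = u$, with condition $\gamma_u = 0$; (B) all $\alpha_{m+1-j} \neq \gamma_j$, so $j_1 = u$, $j_2 = 0$, with condition $\gamma_u = 1$; and (C) both indices positive, forcing $\max(j_1, j_2) = u$ and $k := \min(j_1, j_2) \in \{1, \ldots, u-1\}$, with condition $\gamma_k \neq \gamma_u$. Cases (A) and (B) together always contribute exactly one qualifying pattern, while for each $k$ case (C) splits into two sub-cases (whether the final bit $a_u$ agrees or disagrees with $\gamma_u$) each offering $2^{k-1}$ choices for the bits below position $k$, totalling $2^k$ patterns weighted by $(\gamma_k \oplus 1)\gamma_u + \gamma_k(\gamma_u \oplus 1)$. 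This reproduces the stated formula for $u \geq 2$; the degenerate cases $u \in \{0,1\}$ fall out of the same count with only Cases (A)/(B) surviving. The main potential pitfall is the symmetry in Case~(C): both sub-cases happen to impose the same XOR condition on $\gamma$, which is easy to overlook but is exactly what makes the final expression symmetric in the choice of $a_u$.
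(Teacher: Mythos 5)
Your proof is correct, and for part~2 it is essentially the paper's own argument in different clothing: like the paper, you sum out the bit $\alpha_{m-u}$ (and the free low bits) to reduce everything to the indicator that $\alpha_{m+1-j_1(u)}=\alpha_{m+1-j_2(u)}$ --- your identity $\sum_{b\in\{0,1\}}(b\oplus c_1)(b\oplus c_2)=1$ iff $c_1=c_2$ is exactly the paper's summand $c_1c_2+(c_1\oplus 1)(c_2\oplus 1)$ --- you invoke the same key observation that for $u\geq 1$ one of $j_1(u),j_2(u)$ must equal $u$ (the paper states this when explaining why its sums over $j_1,j_2$ stop at $u-1$), and your cases (A), (B), (C) with $k=\min(j_1,j_2)$ are a repackaging of the paper's decomposition $T_1+T_2$: your (A) and (B) are the $j_1=0$ and $j_2=0$ terms contributing $(\gamma_u\oplus 1)+\gamma_u=1$, and the two sub-cases of your (C) are the $j_1=k$ and $j_2=k$ terms, each with $2^{k-1}$ free bits below position $k$ and the common weight $(\gamma_k\oplus 1)\gamma_u+\gamma_k(\gamma_u\oplus 1)$, whose doubling to $2^k$ is precisely the paper's addition of $T_1$ and $T_2$. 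Where you genuinely depart from the paper is part~1: the paper disposes of it by citing Pillichshammer's identity $\sum_{\alpha\in\qqq}\prod_{i=1}^{k}(\alpha_{m-u_i}\oplus\alpha_{m+1-j(u_i)})=2^{m-k}$ and asserting that its proof carries over verbatim with mixed $j_1,j_2$, whereas you give a short self-contained count; your accounting is right (summing the free bit $\alpha_{m-u_2}$ first yields $1$ regardless of how $c_2$ depends on the higher bits, after which only the first factor survives, and $c_1$ depends only on the top $u_1$ bits, so $\alpha_{m-u_1}$ sums against it to $1$), and it buys a fully self-contained lemma at the price of a few lines. Two cosmetic points: since the two factors are not symmetric ($j_1$ versus $j_2$), you should remark that the case $u_1>u_2$ is handled by the identical argument with the roles of the factors swapped; and for $u=0$ the condition ``$\gamma_u=0$'' in case (A) should be replaced by the direct observation that $j_1(0)=j_2(0)=0$ forces $c_1=c_2=0$, which still yields exactly one qualifying (empty) pattern and hence the claimed value $2^{m-1}$.
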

\begin{proof}  
We mention that Pillichshammer showed in \cite[Lemma 2]{Pill} the formula
$$ \sum_{\alpha\in\qqq}\prod_{i=1}^{k}\left(\alpha_{m-u_i}\oplus \alpha_{m+1-j(u_i)}\right)=2^{m-k} $$
for an integer $1\leq k \leq m-1$ and numbers $u_1,\dots,u_k\in \{0,\dots,m-1\}$ with $u_i\neq u_j$ for $1\leq i\neq j \leq k$, where $j(u)$ refers to the local discrepancy of the classical Hammersley point set.
By studying his proof, one sees that the argumentation does not change at all if we replace some of the $j(u_i)$
appearing in the formula by $j_1(u_i)$ and the others by $j_2(u_i)$, and thus we obtain the claimed identity stated in the first item of this lemma by choosing $k=2$ and replacing $j(u_1)$ by $j_1(u_1)$ and $j(u_2)$ by $j_2(u_2)$. 

We show the second item. 
For $u=0$ we have $j_1(u)=0$ and $j_2(u)=0$ by definition and hence
\begin{align*}
    \sum_{\alpha\in\qqq}(\alpha_m\oplus\alpha_{m+1})(\alpha_m\oplus\alpha_{m+1})=\sum_{\alpha_1,\dots,\alpha_m=0}^{1}\alpha_m=		\sum_{\alpha_1,\dots,\alpha_{m-1}=0}^{1}1=2^{m-1}=2^{m-u-1}.
\end{align*}
If $u=1$, we use the fact that $j_1(1)$ and $j_2(1)$ only depend on $\alpha_m$ and write
\begin{align*} 
 \sum_{\alpha\in\qqq}&(\alpha_{m-1}\oplus\alpha_{m+1-j_1(1)})(\alpha_{m-1}\oplus\alpha_{m+1-j_2(1)}) \\
   =& \sum_{\alpha_m=0}^{1}\left(\sum_{\alpha_1,\dots,\alpha_{m-1}=0}^{1}(\alpha_{m-1}\oplus\alpha_{m+1-j_1(1)})(\alpha_{m-1}\oplus\alpha_{m+1-j_2(1)})\right) \\
	 =& 2^{m-2}\sum_{\alpha_m=0}^{1}\left(\alpha_{m+1-j_1(1)}\alpha_{m+1-j_2(1)}+(\alpha_{m+1-j_1(1)}\oplus 1)(\alpha_{m+1-j_2(1)}\oplus 1)\right).
\end{align*}
We have to distinguish between the cases $\alpha_m=\gamma_1$ and $\alpha_m=\gamma_1\oplus 1$. In the first case we obviously have
$j_1(1)=0$ and $j_2(1)=1$ whereas in the second case we have $j_1(1)=1$ and $j_2(1)=0$. We conclude
\begin{align*} 
 2^{m-2}&\sum_{\alpha_m=0}^{1}\left(\alpha_{m+1-j_1(1)}\alpha_{m+1-j_2(1)}+(\alpha_{m+1-j_1(1)}\oplus 1)(\alpha_{m+1-j_2(1)}\oplus 1)\right) \\
   =& 2^{m-2}\sum_{\alpha_m=\gamma_1}(\alpha_m\oplus 1)+2^{m-2}\sum_{\alpha_m=\gamma_1\oplus 1}(\alpha_m\oplus 1) \\
	 =& 2^{m-2}(\gamma_1\oplus 1)+2^{m-2}\gamma_1=2^{m-2}=2^{m-u-1}.
\end{align*}
We turn to the case $u \in \{2,\dots,m-1\}$. Since $j_1(u)$ and $j_2(u)$ only depend on $\alpha_{m+1-u},\dots,\alpha_m$
but not on $\alpha_1,\dots,\alpha_{m-u}$, we observe that
\begin{align*}
    &\sum_{\alpha\in\qqq} (\alpha_{m-u}\oplus \alpha_{m+1-j_1(u)})(\alpha_{m-u}\oplus \alpha_{m+1-j_2(u)}) \\
		&= \sum_{\alpha_1,\dots,\alpha_m=0}^{1}(\alpha_{m-u}\oplus \alpha_{m+1-j_1(u)})(\alpha_{m-u}\oplus \alpha_{m+1-j_2(u)}) \\
    =&\sum_{\alpha_{m+1-u},\dots,\alpha_{m}=0}^{1}\left(\sum_{\alpha_1,\dots,\alpha_{m-u}=0}^{1}(\alpha_{m-u}\oplus \alpha_{m+1-j_1(u)})(\alpha_{m-u}\oplus \alpha_{m+1-j_2(u)})\right) \\
    =& 2^{m-u-1}\sum_{\alpha_{m+1-u},\dots,\alpha_{m}=0}^{1}\left(\alpha_{m+1-j_1(u)}\alpha_{m+1-j_2(u)}+(\alpha_{m+1-j_1(u)}\oplus 1)(\alpha_{m+1-j_2(u)}\oplus 1)\right) \\
    =& 2^{m-u-1}\sum_{j_1=0}^{u-1}\sum_{\substack{\alpha_{m+1-u},\dots,\alpha_{m}=0 \\ j_1(u)=j_1}}^{1}\left(\alpha_{m+1-j_1}\alpha_{m+1-j_2(u)}+(\alpha_{m+1-j_1}\oplus 1)(\alpha_{m+1-j_2(u)}\oplus 1)\right)  \\
    &+ 2^{m-u-1}\sum_{j_2=0}^{u-1}\sum_{\substack{\alpha_{m+1-u},\dots,\alpha_{m}=0 \\ j_2(u)=j_2}}^{1}\left(\alpha_{m+1-j_1(u)}\alpha_{m+1-j_2}+(\alpha_{m+1-j_1(u)}\oplus 1)(\alpha_{m+1-j_2}\oplus 1)\right) \\
    =:&\, T_1+T_2.
\end{align*}
One might wonder why the sums over $j_1$ and $j_2$ end in $u-1$ instead of $u$ and why they do not coincide. The reason is that $j_1(u)\in \{0,\dots,u-1\}$ implies $j_2(u)=u$ and $j_2(u)\in \{0,\dots,u-1\}$ implies $j_1(u)=u$. This can be seen as follows: $j_1(u)\in \{0,\dots,u-1\}$ implies $a_{m+1-u}=\gamma_u$, because otherwise we would have $j_1(u)=u$. But from the fact that $a_{m+1-u}=\gamma_u\neq \gamma_u\oplus 1$, we immediately derive $j_2(u)=u$. The other way round can be explained analogously. This means that the case $j_2(u)=u$ is actually contained in the sum over $j_1$ and reversely. We find
\begin{align*}
   T_1=& 2^{m-u-1}\sum_{\substack{\alpha_{m+1-u}=\gamma_u \\ \vdots  \\ \alpha_{m-1}=\gamma_{2} \\ \alpha_{m}=\gamma_{1}}}\left(\alpha_{m+1}\alpha_{m+1-j_2(u)}+(\alpha_{m+1}\oplus 1)(\alpha_{m+1-j_2(u)}\oplus 1)\right) \\
     &+ 2^{m-u-1}\sum_{j_1=1}^{u-1}\sum_{\alpha_{m-j_1+2},\dots,\alpha_{m}=0}^{1} \\ &\sum_{\substack{\alpha_{m+1-u}=\gamma_u \\ \vdots \\ \alpha_{m-j_1}=\gamma_{j_1+1} \\ \alpha_{m+1-j_1}=\gamma_{j_1}\oplus1}}\left(\alpha_{m+1-j_1}\alpha_{m+1-j_2(u)}+(\alpha_{m+1-j_1}\oplus 1)(\alpha_{m+1-j_2(u)}\oplus 1)\right) \\
    =& 2^{m-u-1}\left(\gamma_u \oplus 1\right) 
     + 2^{m-u-1}\sum_{j_1=1}^{u-1}2^{j_1-1}\left((\gamma_{j_1}\oplus1)\gamma_u+\gamma_{j_1}(\gamma_u\oplus 1)\right).
\end{align*}

Similarly we argue that
$$ T_2=2^{m-u-1}\gamma_u 
     + 2^{m-u-1}\sum_{j_2=1}^{u-1}2^{j_2-1}\left((\gamma_{j_2}\oplus1)\gamma_u+\gamma_{j_2}(\gamma_u\oplus 1)\right). $$
 Adding $T_1$ and $T_2$ completes the proof of the second item of this lemma.
\end{proof}

\begin{lemma} \label{Betalemma} Let $\beta$ be $m$-bit.
\begin{enumerate}
\item For $u_1,u_2 \in \{0,\dots,m-1\}$ with $u_1\neq u_2$ we have
$$ \sum_{\beta\in\qqq}\|2^{u_1}\beta\|\|2^{u_2}\beta\|=\frac{2^m}{2^4}. $$
\item For $u \in \{0,\dots,m-1\}$ we have
$$ \sum_{\beta\in\qqq}\|2^{u}\beta\|^2=\frac{2^{2m}+2^{2u+1}}{3\cdot 2^{m+2}}. $$
\end{enumerate}
\end{lemma}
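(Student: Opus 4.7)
The strategy is to parameterize $\beta \in \qqq$ by $\beta = k/2^m$ with $k \in \{1,\dots,2^m-1\}$ and extend the sum to all $k \in \{0,\dots,2^m-1\}$ (the $k=0$ term contributes zero). The central identity is $\|2^u \beta\| = \|k/2^{m-u}\|$, so the summand depends on $k$ only through its residue modulo $M_u := 2^{m-u}$, and each residue in $\{0,1,\dots,M_u-1\}$ is attained exactly $2^u$ times as $k$ runs over $\{0,\dots,2^m-1\}$.

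For the second item, letting $M := 2^{m-u}$, this reduction immediately yields
$$\sum_{\beta \in \qqq}\|2^u\beta\|^2 = 2^u \sum_{r=0}^{M-1}\left\|\frac{r}{M}\right\|^2.$$
Writing $\|r/M\| = \min(r, M-r)/M$ and exploiting the symmetry $r \leftrightarrow M-r$ together with the standard formula $\sum_{j=1}^{n} j^2 = n(n+1)(2n+1)/6$, a direct computation gives $\sum_{r=0}^{M-1}\|r/M\|^2 = (M^2+2)/(12M)$. Substituting $M = 2^{m-u}$ and multiplying by $2^u$ simplifies to the claimed $(2^{2m}+2^{2u+1})/(3\cdot 2^{m+2})$.

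For the first item, assume without loss of generality $u_1 < u_2$ and set $M_i := 2^{m-u_i}$, so $M_2 \mid M_1$. Splitting $k$ according to the coarser residue $r := k \bmod M_2$, each such class contains exactly $2^{u_2}$ values of $k$, and within one class $k \bmod M_1$ takes precisely the values $r + j M_2$ for $j = 0,\dots,M_1/M_2 - 1$, each attained $2^{u_1}$ times. This yields
$$\sum_{\beta \in \qqq}\|2^{u_1}\beta\|\|2^{u_2}\beta\| = 2^{u_1}\sum_{r=0}^{M_2-1}\left\|\frac{r}{M_2}\right\|\sum_{j=0}^{M_1/M_2-1}\left\|\frac{r}{M_1}+\frac{jM_2}{M_1}\right\|.$$
The inner sum has the shape $\sum_{j=0}^{n-1}\|x + j/n\|$ with $n = M_1/M_2 = 2^{u_2-u_1} \geq 2$ and $x = r/M_1$. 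The plan is to apply the elementary identity $\sum_{j=0}^{n-1}\|x + j/n\| = n/4$, valid for all $x \in \RR$ and all even $n \geq 2$, which reduces the inner sum to $(M_1/M_2)/4$. Combined with $\sum_{r=0}^{M_2-1}\|r/M_2\| = M_2/4$ (the same elementary computation without the squares), the whole collapses to $2^{u_1}\cdot (M_1/M_2)/4 \cdot M_2/4 = 2^m/16$, as required.

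The main obstacle is the sub-identity $\sum_{j=0}^{n-1}\|x+j/n\| = n/4$ for all $x \in \RR$ and all even $n \geq 2$. It is essential that $n$ is even (it fails for odd $n$, e.g.\ $n=3$, $x=0$ gives $2/3$), and this is precisely the case here since $n = 2^{u_2-u_1}$. A short verification proceeds by noting that both sides are $1/n$-periodic in $x$, so one may assume $x \in [0, 1/n)$; the index set then splits cleanly into $\{0, \dots, n/2-1\}$ (where $x + j/n \in [0, 1/2)$ and $\|x+j/n\| = x + j/n$) and $\{n/2, \dots, n-1\}$ (where $x + j/n \in [1/2, 1)$ and $\|x+j/n\| = 1 - x - j/n$), after which the two arithmetic progressions sum to $n/4$ by direct computation. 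Once this identity is in hand, the two items follow by the reductions above.
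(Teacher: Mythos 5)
Your proof is correct, but it takes a different route from the paper: the paper offers no argument at all for this lemma, simply citing Lemma~3 of Pillichshammer's paper \emph{On the $L_p$ discrepancy of the Hammersley point set} (reference [Pill]), where these two sums were established. Your version is a self-contained elementary derivation: the reduction of $\sum_{\beta}\|2^u\beta\|^2$ to residue classes modulo $2^{m-u}$ and the closed form $\sum_{r=0}^{M-1}\|r/M\|^2=(M^2+2)/(12M)$ check out (e.g.\ $u=m-1$ gives $2^{m-1}/4$, matching $(2^{2m}+2^{2m-1})/(3\cdot2^{m+2})$), and for the mixed sum your two-level splitting into coarse residues mod $M_2$ and fine residues mod $M_1$ is clean, with the key averaging identity $\sum_{j=0}^{n-1}\|x+j/n\|=n/4$ for even $n$ doing the work. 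You are right to flag the even-$n$ restriction as essential, and it is satisfied throughout since $n=2^{u_2-u_1}\geq 2$ and $M_2=2^{m-u_2}\geq 2$ (using $u_2\leq m-1$) are powers of two; the tie at the value $1/2$ is harmless since both branches of $\|\cdot\|$ agree there. What your approach buys is independence from the external reference, at the cost of a page of computation; what the paper's citation buys is brevity, since these are standard dyadic distance-sum evaluations already in the literature.
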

\begin{proof} The first formula follows from \cite[Lemma 3 a)]{Pill} and the second one
is \cite[Lemma 3 b)]{Pill}.
\end{proof}

We introduce the parameter $l=l(\vecs):=|\{i\in\{1,\dots,m\}:\sigma_i=0\}|$, i. e. $l$ is the number of components of $\vecs$ which are equal to zero. We use this notation for the rest of this paper.

\begin{lemma} \label{gemischt}
   We have
   $$ \frac{1}{2^{2m}}\sum_{\alpha,\beta\in\qqq}\Delta_1(\alpha,\beta)\Delta_2(\alpha,\beta)=-\frac{m^2}{64}-\frac{l^2}{16}+\frac{lm}{16}-\frac{m}{192}-\frac{5}{144}-\frac{1}{9\cdot 2^{2m+2}}. $$
\end{lemma}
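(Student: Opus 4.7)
The plan is to expand both local discrepancies via the first item of Lemma~\ref{AllgemeineFormel} and exploit the fact that $(-1)^{\sigma_{u+1}^{\ast}}=-(-1)^{\sigma_{u+1}}$, so that
$$
\Delta_1(\alpha,\beta)\Delta_2(\alpha,\beta)=-\sum_{u_1,u_2=0}^{m-1}(-1)^{\sigma_{u_1+1}+\sigma_{u_2+1}}\|2^{u_1}\beta\|\|2^{u_2}\beta\|\bigl(\alpha_{m-u_1}\oplus\alpha_{m+1-j_1(u_1)}\bigr)\bigl(\alpha_{m-u_2}\oplus\alpha_{m+1-j_2(u_2)}\bigr).
$$
Since $\Delta_i(\alpha,\beta)$ vanishes for $\alpha=0$ or $\beta=0$, summing over $\qqq$ is the same as summing over $\mathbb{Q}(2^m)$, which I shall do implicitly. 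I then split the double sum into off-diagonal ($u_1\neq u_2$) and diagonal ($u_1=u_2$) parts.

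For the off-diagonal part, Lemma~\ref{Alphalemma}(1) shows that the $\alpha$-factor sums to $2^{m-2}$, which is \emph{independent} of $\beta$, so the $\beta$-sum factors out and Lemma~\ref{Betalemma}(1) supplies $2^m/16$. What remains is
$$
-\frac{2^{2m}}{64}\sum_{u_1\neq u_2}(-1)^{\sigma_{u_1+1}+\sigma_{u_2+1}}=-\frac{2^{2m}}{64}\Bigl((2l-m)^2-m\Bigr),
$$
after recognising $\sum_{u=0}^{m-1}(-1)^{\sigma_{u+1}}=2l-m$. This is the source of the $l^2,lm$ and the $-m^2/64$ terms in the claimed formula.

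For the diagonal part the $\alpha$-factor depends on $\beta$ through the $\gamma_j=\beta_j\oplus\sigma_j$, so more care is needed. I first rewrite the bracket in Lemma~\ref{Alphalemma}(2) using the identity $(\gamma_j\oplus 1)\gamma_u+\gamma_j(\gamma_u\oplus 1)=\gamma_j\oplus\gamma_u$. The coupled sum $\sum_\beta\|2^u\beta\|^2(\gamma_j\oplus\gamma_u)$ is then evaluated by the key observation that $\|2^u\beta\|^2$ depends only on $\beta_{u+1},\ldots,\beta_m$, whereas $\gamma_j\oplus\gamma_u=\beta_j\oplus\beta_u\oplus(\sigma_j\oplus\sigma_u)$ depends only on $\beta_j,\beta_u$ with $j<u$. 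Hence the $\beta_1,\ldots,\beta_u$ and $\beta_{u+1},\ldots,\beta_m$ sums factorise, giving $\sum_{\beta_1,\ldots,\beta_u}(\gamma_j\oplus\gamma_u)=2^{u-1}$ and combining with Lemma~\ref{Betalemma}(2) yields a closed form. Treating $u\in\{0,1\}$ separately (where Lemma~\ref{Alphalemma}(2) has no $\gamma$-dependence), I arrive after some cancellation at the clean expression
$$
\sum_{\alpha,\beta}\|2^u\beta\|^2 \cdot(\text{$\alpha$-factor})=\begin{cases}\tfrac{1}{24}(2^{2m}+2)&u=0,\\[1mm]\tfrac{2^{2m-4}}{3}+\tfrac{2^{2u-3}}{3}&u\geq 1,\end{cases}
$$
so the diagonal contribution equals $-\sum_{u=0}^{m-1}(\cdots)$, which, using $\sum_{u=1}^{m-1}2^{2u}=(2^{2m}-4)/3$, simplifies to $-(3m+5)2^{2m}/144-1/36$.

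Adding the two contributions and dividing by $2^{2m}$, the $m$-terms combine as $-m/48+m/64=-m/192$, and $1/(36\cdot 2^{2m})=1/(9\cdot 2^{2m+2})$, matching exactly the claimed right-hand side. The main obstacle I anticipate is precisely the coupled diagonal sum: keeping track of which $\beta$-bits are entangled with the $\gamma_j,\gamma_u$ factors and which are free, and then performing the geometric-series simplification for $u\geq 2$ without errors. Once that is handled, the remaining work is bookkeeping on the arithmetic combining the diagonal and off-diagonal contributions.
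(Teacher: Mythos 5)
Your proof is correct --- I checked the intermediate values (the off-diagonal sign sum $\sum_{u_1\neq u_2}(-1)^{\sigma_{u_1+1}+\sigma_{u_2+1}}=(2l-m)^2-m$, the per-$u$ diagonal values $\tfrac{1}{24}(2^{2m}+2)$ and $\tfrac{2^{2m}}{48}+\tfrac{2^{2u}}{24}$, and the final combination $\tfrac{m}{64}-\tfrac{m}{48}=-\tfrac{m}{192}$) and all agree with the paper's totals --- and it takes essentially the same route as the paper: expand both discrepancies via Lemma~\ref{AllgemeineFormel}, split into diagonal and off-diagonal parts, and apply Lemmas~\ref{Alphalemma} and~\ref{Betalemma} together with the key observation that $\|2^u\beta\|^2$ depends only on $\beta_{u+1},\dots,\beta_m$. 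The only differences are organizational: the paper tracks the signs by splitting into four sums $R_1,\dots,R_4$ according to the values of $\sigma_{u_1+1},\sigma_{u_2+1}$ where you use the squared-sum identity, and it evaluates the coupled $\gamma$-sum as two constrained sums $\Sigma_2,\Sigma_3$ where you use $(\gamma_j\oplus 1)\gamma_u+\gamma_j(\gamma_u\oplus 1)=\gamma_j\oplus\gamma_u$ and a direct count of $2^{u-1}$.
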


\begin{proof}
In this proof we write for the sake of simplicity
$ A(\alpha,\beta,\vecs,u):=\alpha_{m-u}\oplus \alpha_{m+1-j(u)},$
where we emphasize the dependence of $j(u)$ on $\alpha$, $\beta$ and $\vecs$.
With the first point of Lemma~\ref{AllgemeineFormel} we get
\begin{align*}
   \frac{1}{2^{2m}}&\sum_{\alpha,\beta\in\qqq}\Delta_1(\alpha,\beta)\Delta_2(\alpha,\beta) \\
     =&
    \frac{1}{2^{2m}}\sum_{\alpha,\beta\in\qqq}\left(\sum_{u_1=0}^{m-1}\|2^{u_1}\beta\|(-1)^{\sigma_{u_1+1}}A(\alpha,\beta,\vecs,u_1)\right) \\ &\hspace{100pt}\times
    \left(\sum_{u_2=0}^{m-1}\|2^{u_2}\beta\|(-1)^{\sigma_{u_2+1}^{\ast}}A(\alpha,\beta,\vecs^{\ast},u_2)\right) \\
    =&
    -\frac{1}{2^{2m}}\sum_{\alpha,\beta\in\qqq}\left(\sum_{u_1=0}^{m-1}\|2^{u_1}\beta\|(-1)^{\sigma_{u_1+1}}A(\alpha,\beta,\vecs,u_1)\right) \\ &\hspace{100pt}\times
    \left(\sum_{u_2=0}^{m-1}\|2^{u_2}\beta\|(-1)^{\sigma_{u_2+1}}A(\alpha,\beta,\vecs^{\ast},u_2)\right) \\
    =& -\frac{1}{2^{2m}}\sum_{\alpha,\beta\in\qqq}\left(\sum_{\substack{u_1=0 \\ \sigma_{u_1+1}=0}}^{m-1}\|2^{u_1}\beta\|A(\alpha,\beta,\vecs,u_1)\right)
    \left(\sum_{\substack{u_2=0 \\ \sigma_{u_2+1}=0}}^{m-1}\|2^{u_2}\beta\|A(\alpha,\beta,\vecs^{\ast},u_2)\right) \\
    &+\frac{1}{2^{2m}}\sum_{\alpha,\beta\in\qqq}\left(\sum_{\substack{u_1=0 \\ \sigma_{u_1+1}=0}}^{m-1}\|2^{u_1}\beta\|A(\alpha,\beta,\vecs,u_1)\right)
    \left(\sum_{\substack{u_2=0 \\ \sigma_{u_2+1}=1}}^{m-1}\|2^{u_2}\beta\|A(\alpha,\beta,\vecs^{\ast},u_2)\right) \\
    &+\frac{1}{2^{2m}}\sum_{\alpha,\beta\in\qqq}\left(\sum_{\substack{u_1=0 \\ \sigma_{u_1+1}=1}}^{m-1}\|2^{u_1}\beta\|A(\alpha,\beta,\vecs,u_1)\right)
    \left(\sum_{\substack{u_2=0 \\ \sigma_{u_2+1}=0}}^{m-1}\|2^{u_2}\beta\|A(\alpha,\beta,\vecs^{\ast},u_2)\right) \\
    &-\frac{1}{2^{2m}}\sum_{\alpha,\beta\in\qqq}\left(\sum_{\substack{u_1=0 \\ \sigma_{u_1+1}=1}}^{m-1}\|2^{u_1}\beta\|A(\alpha,\beta,\vecs,u_1)\right)
    \left(\sum_{\substack{u_2=0 \\ \sigma_{u_2+1}=1}}^{m-1}\|2^{u_2}\beta\|A(\alpha,\beta,\vecs^{\ast},u_2)\right) \\
    =:&-R_1+R_2+R_3-R_4.
\end{align*}
With the first part of Lemma~\ref{Alphalemma} and Lemma~\ref{Betalemma} we obtain
\begin{align*} R_2=&\frac{1}{2^{2m}}\sum_{\substack{u_1=0 \\ \sigma_{u_1+1}=0}}^{m-1}\sum_{\substack{u_2=0 \\ \sigma_{u_2+1}=1}}^{m-1}\sum_{\beta\in\qqq}\|2^{u_1}\beta\|\|2^{u_2}\beta\|
\sum_{\alpha\in\qqq}A(\alpha,\beta,\vecs,u_1)A(\alpha,\beta,\vecs^{\ast},u_2) \\
=& \frac{1}{2^{2m}}\sum_{\substack{u_1=0 \\ \sigma_{u_1+1}=0}}^{m-1}\sum_{\substack{u_2=0 \\ \sigma_{u_2+1}=1}}^{m-1}\frac{2^m}{2^4}2^{m-2}
  =\frac{1}{64}l(m-l).
\end{align*}
In the same way we show $ R_3=\frac{1}{64}l(m-l)$. To calculate $R_1$ and $R_4$, we need to distinguish between
the cases where $u_1=u_2$ and where $u_1 \neq u_2$. This leads to
\begin{align*} R_1=&\frac{1}{2^{2m}}\underbrace{\sum_{\substack{u_1=0 \\ \sigma_{u_1+1}=0}}^{m-1}\sum_{\substack{u_2=0 \\ \sigma_{u_2+1}=0}}^{m-1}}_{u_1 \neq u_2} \frac{2^m}{2^4}2^{m-2} \\ &+\frac{1}{2^{2m}}\sum_{\substack{u=0 \\ \sigma_{u+1}=0}}^{m-1}\sum_{\beta\in\qqq}\|2^{u}\beta\|^2\sum_{\alpha\in\qqq}A(\alpha,\beta,\vecs,u)A(\alpha,\beta,\vecs^{\ast},u) \\
=& \frac{1}{64}l(l-1)+\frac{1}{2^{2m}}\sum_{\substack{u=0 \\ \sigma_{u+1}=0}}^{m-1}\sum_{\beta\in\qqq}\|2^{u}\beta\|^2
\sum_{\alpha\in\qqq}A(\alpha,\beta,\vecs,u)A(\alpha,\beta,\vecs^{\ast},u).
\end{align*}
Similarly, we obtain
$$ R_4=\frac{1}{64}(m-l)(m-l-1)+\frac{1}{2^{2m}}\sum_{\substack{u=0 \\ \sigma_{u+1}=1}}^{m-1}\sum_{\beta\in\qqq}\|2^{u}\beta\|^2
\sum_{\alpha\in\qqq}A(\alpha,\beta,\vecs,u)A(\alpha,\beta,\vecs^{\ast},u). $$
Adding $R_1$ to $R_4$ yields
\begin{align*}\frac{1}{2^{2m}}&\sum_{\alpha,\beta\in\qqq}\Delta_1(\alpha,\beta)\Delta_2(\alpha,\beta) \\
     =& -\frac{1}{64}(m^2+4l^2-4lm-m) \\&-\frac{1}{2^{2m}}\sum_{u=0}^{m-1}\sum_{\beta\in\qqq}\|2^{u}\beta\|^2
		  \sum_{\alpha\in\qqq}A(\alpha,\beta,\vecs,u)A(\alpha,\beta,\vecs^{\ast},u).
\end{align*}

Hence, our final task is to compute the last expression in the above line. 
We employ the second part of Lemma~\ref{Alphalemma} and Lemma~\ref{Betalemma} to obtain
\begin{align*}
     \frac{1}{2^{2m}}&\sum_{u=0}^{m-1}\sum_{\beta\in\qqq}\|2^{u}\beta\|^2\sum_{\alpha\in\qqq}A(\alpha,\beta,\vecs,u)A(\alpha,\beta,\vecs^{\ast},u)\\
		=&\frac{1}{2^{2m}}\sum_{u=0}^{m-1}2^{m-u-1}\sum_{\beta\in\qqq}\|2^u\beta\|^2\\
		     &+\frac{1}{2^{2m}}\sum_{u=2}^{m-1}2^{m-u-1}\sum_{\beta\in\qqq}\|2^u\beta\|^2  \sum_{j=1}^{u-1}2^j((\gamma_j \oplus 1)\gamma_u+\gamma_j (\gamma_u \oplus 1)) \\
				=&\frac{1}{2^{m+1}}\sum_{u=0}^{m-1}2^{-u}\frac{2^{2m}+2^{2u+1}}{3\cdot 2^{m+2}} \\
				&+ \frac{1}{2^{m+1}}\sum_{u=2}^{m-1}2^{-u}\sum_{j=1}^{u-1}2^j\sum_{\beta\in\qqq}\|2^u\beta\|^2
				        (\beta_j \oplus \sigma_j \oplus 1)(\beta_u \oplus \sigma_u) \\
				&+ \frac{1}{2^{m+1}}\sum_{u=2}^{m-1}2^{-u}\sum_{j=1}^{u-1}2^j\sum_{\beta\in\qqq}\|2^u\beta\|^2
				        (\beta_j \oplus \sigma_j)(\beta_u \oplus \sigma_u \oplus 1)=:\Sigma_1+\Sigma_2+\Sigma_3. \\
\end{align*}
Finding the value of $\Sigma_1$ is a matter of straightforward calculation. We have 
$$\Sigma_1=\frac{1}{12}\left(1-\frac{1}{2^{2m}}\right).$$
For $\Sigma_2$ we find
\begin{align*}
    \Sigma_2=&\frac{1}{2^{m+1}}\sum_{u=2}^{m-1}2^{-u}\sum_{j=1}^{u-1}2^j\sum_{\substack{\beta_1,\dots,\beta_{j-1},\beta_{j+1},\dots,\beta_{u-1}=0 \\ \beta_j=\sigma_j\\ \beta_u=\sigma_u \oplus 1}}^{1}\sum_{\beta_{u+1},\dots,\beta_m=0}^{1}\|2^u\beta\|^2.
\end{align*}
We remark at this point that $\|2^u\beta\|^2$ only depends on $\beta_{u+1},\dots,\beta_m$. Hence,
$$ \sum_{\beta_{u+1},\dots,\beta_m=0}^{1}\|2^u\beta\|^2=2^{-u}\sum_{\beta_{1},\dots,\beta_m=0}^{1}\|2^u\beta\|^2
= 2^{-u}\sum_{\beta\in\qqq}\|2^u\beta\|^2=2^{-u}\frac{2^{2m}+2^{2u+1}}{3\cdot 2^{m+2}}.$$
We arrive at
\begin{align*}
    \Sigma_2=&\frac{1}{2^{m+1}}\sum_{u=2}^{m-1}2^{-u}\sum_{j=1}^{u-1}2^j2^{u-2}2^{-u}\frac{2^{2m}+2^{2u+1}}{3\cdot 2^{m+2}} \\
		        =&\frac{1}{2^{m+1}}\sum_{u=2}^{m-1}2^{-u}(2^u-2)2^{u-2}2^{-u}\frac{2^{2m}+2^{2u+1}}{3\cdot 2^{m+2}} \\
						=& \frac{m}{96}-\frac{7}{288}+\frac{1}{9\cdot 2^{2m+1}}. \end{align*}
It is clear that $\Sigma_3=\Sigma_2$. Thus, after adding all the results the proof of the lemma is finally complete.
\end{proof}

For the proof of Theorem~\ref{Theo}, we will also need an exact formula for the $L_2$ discrepancy of $\cH_m(\vecs)$. Such a formula was presented in \cite[Theorem 1]{Kri2}.

\begin{lemma}[Kritzer and Pillichshammer] \label{l2shift} Let $m\in\NN$ and $\vecs \in \{0,1\}^m$. We have
   $$ (2^mL_2(\cH_m(\vecs)))^2=\frac{m^2}{64}-\frac{19m}{192}-\frac{lm}{16}+\frac{l^2}{16}+\frac{l}{4}+\frac38+\frac{m}{16\cdot 2^m}-\frac{l}{8\cdot 2^m}+\frac{1}{4\cdot 2^m}-\frac{1}{72\cdot 4^m}.  $$
\end{lemma}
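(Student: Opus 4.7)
The plan is to write $(2^{m+1}L_2(\cH_m^{\sym}(\vecs)))^2 = \int_0^1\!\int_0^1 \Delta_{\sym}^2\,\mathrm{d}\alpha\,\mathrm{d}\beta$ and, via Lemma~\ref{symlemma}, decompose
\[
\int_0^1\!\int_0^1 \Delta_{\sym}^2 \,\mathrm{d}\alpha\,\mathrm{d}\beta \;=\; \mathrm{I}_1 + \mathrm{I}_2 + 2\mathrm{I}_{12},
\]
where $\mathrm{I}_k := \int_0^1\!\int_0^1 \Delta_k^2\,\mathrm{d}\alpha\,\mathrm{d}\beta$ for $k=1,2$ and $\mathrm{I}_{12} := \int_0^1\!\int_0^1 \Delta_1\Delta_2\,\mathrm{d}\alpha\,\mathrm{d}\beta$. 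The diagonal pieces are handed to us by Lemma~\ref{l2shift}: $\mathrm{I}_1=(2^m L_2(\cH_m(\vecs)))^2$ takes the parameter $l$, while $\mathrm{I}_2=(2^m L_2(\cH_m(\vecs^{\ast})))^2$ takes the parameter $m-l$ because $\vecs^{\ast}$ flips every bit of $\vecs$. Summing the two closed forms eliminates the isolated linear-in-$l$ contributions and leaves the quadratic-in-$l$ content in a single combination $\tfrac{l^2-lm}{8}$.

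The substantive work is evaluating $\mathrm{I}_{12}$. Lemma~\ref{gemischt} already supplies the discrete cross-sum
\[
S_{12} := \frac{1}{2^{2m}}\sum_{\alpha,\beta\in\qqq} \Delta_1(\alpha,\beta)\Delta_2(\alpha,\beta),
\]
whose $l$-content is precisely $-\tfrac{l^2}{16}+\tfrac{lm}{16}$, so that $\mathrm{I}_1+\mathrm{I}_2+2S_{12}$ is already shift-independent. To upgrade $S_{12}$ to the integral $\mathrm{I}_{12}$, I partition $[0,1)^2$ into the $4^m$ dyadic sub-squares $I_{a,b}$ of side $2^{-m}$. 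On each such sub-square, part~2 of Lemma~\ref{AllgemeineFormel} yields
\[
\Delta_k(\alpha,\beta) \;=\; \Delta_k(\alpha(m),\beta(m)) + 2^m\bigl(\alpha(m)\beta(m)-\alpha\beta\bigr), \qquad k=1,2,
\]
with $\alpha(m),\beta(m)$ constant on $I_{a,b}$. Expanding $\Delta_1\Delta_2$ into four summands and integrating over each $I_{a,b}$, then summing over $a,b$, splits $\mathrm{I}_{12}$ into (i) the discrete piece $S_{12}$, (ii) two linear cross contributions of the form $2^m \sum_{\alpha,\beta\in\qqq}\Delta_k(\alpha,\beta)\cdot J_{\alpha,\beta}$ with $J_{\alpha,\beta} := \int_{I_{a,b}}(\alpha(m)\beta(m)-\alpha\beta)\,\mathrm{d}\alpha\,\mathrm{d}\beta$ affine in $\alpha(m),\beta(m)$ and computable by elementary calculus, and (iii) a purely geometric piece $2^{2m}\sum_{a,b}\int_{I_{a,b}}(\alpha(m)\beta(m)-\alpha\beta)^2\,\mathrm{d}\alpha\,\mathrm{d}\beta$ that depends on $m$ alone. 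The linear pieces (ii) reduce to weighted one-variable sums $\sum_{\alpha,\beta\in\qqq}\Delta_k(\alpha,\beta)$, $\sum\Delta_k(\alpha,\beta)\alpha$, $\sum\Delta_k(\alpha,\beta)\beta$; these I would compute by inserting part~1 of Lemma~\ref{AllgemeineFormel}, applying Lemma~\ref{Betalemma}, and invoking the one-variable summation identities used in \cite{Pill,Kri1,Kri2}. The $\vecs\leftrightarrow\vecs^{\ast}$ symmetry (equivalently $l\leftrightarrow m-l$) forces any surviving linear-in-$l$ terms from $k=1$ and $k=2$ to cancel in pairs, so the aggregate correction $2(\mathrm{I}_{12}-S_{12})$ will be purely a function of $m$.

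Putting everything together, $\mathrm{I}_1+\mathrm{I}_2+2\mathrm{I}_{12}$ becomes an expression in $m$ alone; a direct check of $\mathrm{I}_1+\mathrm{I}_2+2S_{12}$ already produces the leading $\tfrac{m}{24}$ and extinguishes all $l$-dependence, so the remaining correction from (ii) and (iii) must supply exactly $\tfrac{11}{8}-\tfrac{49}{72}+\tfrac{1}{2^{m+1}}+\tfrac{1}{36\cdot 4^m} = \tfrac{25}{36}+\tfrac{1}{2^{m+1}}+\tfrac{1}{36\cdot 4^m}$, after which the closed form $\tfrac{m}{24}+\tfrac{11}{8}+\tfrac{1}{2^m}-\tfrac{1}{9\cdot 2^{2m+1}}$ is reached. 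The second display of the theorem then follows by substituting $m=\log N/\log 2-1$ and absorbing constants via $\tfrac{11}{8}-\tfrac{1}{24}=\tfrac{4}{3}$, $\tfrac{1}{2^m}=\tfrac{2}{N}$, $\tfrac{1}{9\cdot 2^{2m+1}}=\tfrac{2}{9N^2}$. The \emph{main obstacle} is step (ii): it demands careful evaluation of several weighted sums of $\Delta_k$-values over the $m$-bit grid, each a priori carrying $l$-dependence that must cancel against its $\vecs^{\ast}$-partner; the calculation closely mirrors those in \cite{Kri2,Pill}, and numerical spot-checks at small $m$ are a prudent guard against arithmetic slips.
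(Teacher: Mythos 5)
Your proposal proves the wrong statement. The statement at hand is Lemma~\ref{l2shift}, the exact formula for $(2^mL_2(\cH_m(\vecs)))^2$ for the \emph{shifted} (non-symmetrized) Hammersley point set; in the paper this lemma is not proved at all but imported verbatim from \cite[Theorem 1]{Kri2}. What you sketch instead is the paper's Section~3 proof of Theorem~\ref{Theo} for the symmetrized set $\cH_m^{\sym}(\vecs)$ --- and, crucially, your very first step feeds the ``diagonal pieces'' $\mathrm{I}_1,\mathrm{I}_2$ through Lemma~\ref{l2shift} itself, evaluated at the parameters $l$ and $m-l$. As an argument for Lemma~\ref{l2shift} this is circular: you assume exactly the formula you were asked to establish. (For what it is worth, your bookkeeping for the symmetrized quantity is consistent with the paper --- $\mathrm{I}_1+\mathrm{I}_2+2S_{12}$ does come out as $\frac{m}{24}+\frac{49}{72}+\frac{1}{2^{m+1}}-\frac{1}{3\cdot 2^{2m+2}}$, and the boundary and geometric corrections do contribute $\frac{25}{36}+\frac{1}{2^{m+1}}+\frac{1}{36\cdot 4^m}$ --- but that verifies Theorem~\ref{Theo} modulo the lemma, not the lemma.)

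A genuine blind proof of Lemma~\ref{l2shift} would have to run the machinery of \cite{Kri2} on $\Delta_1$ alone: write $(2^mL_2(\cH_m(\vecs)))^2=\int_0^1\int_0^1\Delta_1(\alpha,\beta)^2\rd\alpha\rd\beta$, split off the strips $\alpha>1-2^{-m}$ or $\beta>1-2^{-m}$, and reduce the bulk to the discrete sum $2^{-2m}\sum_{\alpha,\beta\in\qqq}\Delta_1(\alpha,\beta)^2$, the \emph{diagonal} analogue of Lemma~\ref{gemischt}. Two features distinguish this from the mixed computation you lean on. First, the sign factor is $(-1)^{\sigma_{u_1+1}+\sigma_{u_2+1}}$ rather than the global minus sign of the $\Delta_1\Delta_2$ case, and since here $j_1(u)=j_2(u)$ the diagonal $\alpha$-sum is simply $2^{m-1}$ (Pillichshammer's identity with $k=1$, no $\gamma_j$-correction as in Lemma~\ref{Alphalemma}(2)); combined with Lemma~\ref{Betalemma} this produces the block $\frac{m^2}{64}-\frac{lm}{16}+\frac{l^2}{16}-\frac{m}{64}+\frac{m}{24}+\dots$ via $\sum_{u_1\neq u_2}(-1)^{\sigma_{u_1+1}+\sigma_{u_2+1}}=(2l-m)^2-m$. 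Second, and decisively, the cross term $S_2=2^{m-1}\frac{2^{m+1}-1}{4^m}\left(\frac{l}{8}-\frac{m}{16}\right)$ now enters \emph{doubled} (both factors are $\Delta_1$) and has no $\vecs^{\ast}$-partner to cancel against --- in the symmetrized setting $S_2+S_3=0$, which is precisely the cancellation your sketch relies on. This surviving term is the source of the odd-in-$l$ contributions $\frac{l}{4}$ and $-\frac{l}{8\cdot 2^m}$ and of the shift from $\frac{5m}{192}$ to $-\frac{19m}{192}$ in the asserted formula. None of this is in your proposal, so the statement remains unproved.
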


\begin{remark} \label{optimalshift} \rm It follows from Lemma~\ref{l2shift} that the optimal choice for $l$ is $\left\lceil\frac{m-5}{2}+\frac{1}{2^m}\right\rceil$, which leads to 
$$(2^mL_2(\cH_m(\vecs)))^2=\frac{5m}{192}+\mathcal{O}(1)  $$
(see also \cite[Corollary 1]{Kri2}).
This means that we achieve the optimal order of $L_2$ discrepancy for $\cH_m(\vecs)$ in this case. In \cite[Theorem 1]{HKP14} it was shown that
we achieve the optimal order of $L_p$ discrepancy for all $p\in[1,\infty)$ if and only if $|2l-m|=\mathcal{O}(\sqrt{m})$.
\end{remark}

\section{Proof of Theorem~\ref{Theo}}

We apply Lemma~\ref{symlemma} to write
\begin{align} (2^{m+1}L_2(\cH_m^{\sym}(\vecs)))^2=& \int_{0}^{1} \int_{0}^{1}  (\Delta_{\sym}(\alpha,\beta))^2 \rd\alpha \rd\beta \nonumber \\ \nonumber
 =&\int_{0}^{1} \int_{0}^{1}  (\Delta_1(\alpha,\beta))^2 \rd\alpha \rd\beta+\int_{0}^{1} \int_{0}^{1}  (\Delta_2(\alpha,\beta))^2 \rd\alpha \rd\beta\\ \nonumber
 &+2\int_{0}^{1} \int_{0}^{1}  \Delta_1(\alpha,\beta)\Delta_2(\alpha,\beta) \rd\alpha \rd\beta\\ \nonumber
 =&(2^{m}L_2(\cH_m(\vecs)))^2+(2^{m}L_2(\cH_m(\vecs^{\ast})))^2 \\ \label{letztesintegral}
&+2\int_{0}^{1} \int_{0}^{1}  \Delta_1(\alpha,\beta)\Delta_2(\alpha,\beta) \rd\alpha \rd\beta.
  \end{align}
  
We know the values of $(2^mL_2(\cH_m(\vecs)))^2$ and $(2^mL_2(\cH_m(\vecs^{\ast})))^2$ already from Lemma~\ref{l2shift} (where in the latter
case we have to insert $m-l$ instead of $l$ in this formula). This yields
\begin{align*}
  (2^mL_2(\cH_m(\vecs)))^2+(2^mL_2(\cH_m(\vecs^{\ast})))^2=\frac{m^2}{32}+\frac{l^2}{8}-\frac{lm}{8}+\frac{5m}{96}+\frac34+\frac{1}{2^{m+1}}-\frac{1}{9\cdot 2^{2m+2}}.
\end{align*} \\
We split the integrals in \eqref{letztesintegral} in four parts:
 \begin{align*}  \int_{0}^{1} \int_{0}^{1}  \Delta_1(\alpha,\beta)\Delta_2(\alpha,\beta) \rd\alpha \rd\beta=&
 \int_{0}^{1-2^{-m}} \int_{0}^{1-2^{-m}} \Delta_1(\alpha,\beta)\Delta_2(\alpha,\beta) \rd\alpha \rd\beta \\&+\int_{0}^{1-2^{-m}} \int_{1-2^{-m}}^{1} \Delta_1(\alpha,\beta)\Delta_2(\alpha,\beta) \rd\alpha\rd\beta  \\
  &+ \int_{1-2^{-m}}^{1} \int_{0}^{1-2^{-m}} \Delta_1(\alpha,\beta)\Delta_2(\alpha,\beta) \rd\alpha\rd\beta \\ & +\int_{1-2^{-m}}^{1} \int_{1-2^{-m}}^{1} \Delta_1(\alpha,\beta)\Delta_2(\alpha,\beta) \rd\alpha \rd\beta \\ =:&I_1+I_2+I_3+I_4.       \end{align*}
  We can calculate $I_2$, $I_3$ and $I_4$ with aid of the second part of Lemma~\ref{AllgemeineFormel}. Since this proceeds analogously as in the
  proof of \cite[Theorem 1]{Kri1}, we only give the results. We have
  $$ I_2=I_3=\frac{25}{36\cdot 2^m}-\frac{5}{9\cdot 4^m}-\frac{25}{36\cdot 4^m}+\frac{2}{3\cdot 8^m}-\frac{1}{9\cdot 16^m} $$ and
  $$ I_4=\frac{7}{6\cdot 4^m}+\frac{1}{9\cdot 16^m}-\frac{2}{3\cdot 8^m}. $$
  It remains to evaluate $I_1$. We use the second part of Lemma~\ref{AllgemeineFormel} to obtain
  \begin{align*}
      I_1=&  \int_{0}^{1-2^{-m}} \int_{0}^{1-2^{-m}}(\Delta_1(\alpha(m),\beta(m))+2^m(\alpha(m)\beta(m)-\alpha\beta))  \\
         &\hspace{65 pt} \times (\Delta_2(\alpha(m),\beta(m))+2^m(\alpha(m)\beta(m)-\alpha\beta)) \rd \alpha \rd \beta \\
         =&\int_{0}^{1-2^{-m}} \int_{0}^{1-2^{-m}}\Delta_1(\alpha(m),\beta(m))\Delta_2(\alpha(m),\beta(m))\rd \alpha \rd \beta \\
         &+2^m\int_{0}^{1-2^{-m}} \int_{0}^{1-2^{-m}}\Delta_1(\alpha(m),\beta(m))(\alpha(m)\beta(m)-\alpha\beta)\rd \alpha \rd \beta \\
         &+2^m\int_{0}^{1-2^{-m}} \int_{0}^{1-2^{-m}}\Delta_2(\alpha(m),\beta(m))(\alpha(m)\beta(m)-\alpha\beta)\rd \alpha \rd \beta \\
         &+2^{2m}\int_{0}^{1-2^{-m}} \int_{0}^{1-2^{-m}}(\alpha(m)\beta(m)-\alpha\beta)^2\rd \alpha \rd \beta=S_1+S_2+S_3+S_4. \end{align*}
      The value of $S_4$ can be calculated in a straightforward way and is
      $$ S_4=-\frac{1}{72\cdot 16^m}(2^m-1)^2(32\cdot 2^m-25\cdot4^m-8). $$
      The expression $S_2$ was computed in the proof of \cite[Theorem 1]{Kri2} and is given by
      $$ S_2=2^{m-1}\frac{2^{m+1}-1}{4^m}\left(\frac{l(\vecs)}{8}-\frac{m}{16}\right). $$
       Analogously, we have
      $$ S_3=2^{m-1}\frac{2^{m+1}-1}{4^m}\left(\frac{l(\vecs^{\ast})}{8}-\frac{m}{16}\right), $$
      where $l(\vecs^{\ast})$ is the number of components in $\vecs^{\ast}$ which are equal to zero.
      Since we obviously have $l(\vecs^{\ast})=m-l(\vecs)$, we find $S_2+S_3=0$.
      So far we have
      $$ I_1=S_1-\frac{1}{72\cdot 16^m}(2^m-1)^2(32\cdot 2^m-25\cdot 4^m-8). $$
      But since
      \begin{align*}  S_1=&\sum_{a,b=1}^{2^m-1}\int_{\frac{a-1}{2^m}}^{\frac{a}{2^m}}\int_{\frac{b-1}{2^m}}^{\frac{b}{2^m}}\Delta_1\left(\frac{a}{2^m},\frac{b}{2^m}\right)\Delta_2\left(\frac{a}{2^m},\frac{b}{2^m}\right)\rd \alpha \rd \beta \\
      =&\frac{1}{2^{2m}}\sum_{\alpha,\beta\in\qqq}\Delta_1(\alpha,\beta)\Delta_2(\alpha,\beta),
      \end{align*}
     we also know the value of $S_1$ from Lemma~\ref{gemischt}. Putting all results together, we obtain the claimed formula in Theorem~\ref{Theo}. $\hfill \Box \vspace{0.5cm}$

\bigskip

\noindent {\bf Acknowledgments.} The author would like to thank Friedrich Pillichshammer for valuable suggestions to improve the presentation.

\noindent{\bf Author's Address:}

\noindent Ralph Kritzinger, Institut f\"{u}r Finanzmathematik und angewandte Zahlentheorie, Johannes Kepler Universit\"{a}t Linz, Altenbergerstra{\ss}e 69, A-4040 Linz, Austria. Email: ralph.kritzinger(at)jku.at

\end{document}